\author{Carlo Sanna}
\address{Universit\`a degli Studi di Torino\\Department of Mathematics\\Turin, Italy}
\email{carlo.sanna.dev@gmail.com}
\keywords{linear recurrence; divisibility}
\subjclass[2010]{Primary: 11B37 Secondary: 11A07, 11N25}
\title[Distribution of integral values for the ratio of two linear recurrences]{Distribution of integral values for the ratio of two~linear~recurrences}
\newtheorem{thm}{Theorem}[section]
\newtheorem{cor}{Corollary}[section]
\newtheorem{lem}[thm]{Lemma}
\theoremstyle{remark}
\def\Li{\operatorname{Li}}
\begin{document}

\begin{abstract}
Let $F$ and $G$ be linear recurrences over a number field $\mathbb{K}$, and let $\mathfrak{R}$ be a finitely generated subring of $\mathbb{K}$.
Furthermore, let $\mathcal{N}$ be the set of positive integers $n$ such that $G(n) \neq 0$ and $F(n) / G(n) \in \mathfrak{R}$.
Under mild hypothesis, Corvaja and Zannier proved that $\mathcal{N}$ has zero asymptotic density.
We prove that $\#(\mathcal{N} \cap [1, x]) \ll x \cdot (\log\log x / \log x)^h$ for all $x \geq 3$, where $h$ is a positive integer that can be computed in terms of $F$ and $G$.
Assuming the Hardy--Littlewood $k$-tuple conjecture, our result is optimal except for the term $\log \log x$. 
\end{abstract}

\maketitle

\section{Introduction}

A sequence of complex numbers $F(n)_{n \in \mathbb{N}}$ is called a \emph{linear recurrence} if there exist some $c_0, \ldots, c_{k-1} \in \mathbb{C}$ ($k \geq 1$), with $c_0 \neq 0$, such that
\begin{equation*}
F(n + k) = \sum_{j = 0}^{k - 1} c_j F(n + j) ,
\end{equation*}
for all $n \in \mathbb{N}$.
In turn, this is equivalent to an (unique) expression
\begin{equation*}
F(n) = \sum_{i = 1}^r f_i(n) \, \alpha_i^n ,
\end{equation*}
for all $n \in \mathbb{N}$, where $f_1, \ldots, f_r \in \mathbb{C}[X]$ are nonzero polynomials and $\alpha_1, \ldots, \alpha_r \in \mathbb{C}^*$ are all the distinct roots of the polynomial
\begin{equation*}
X^k - c_{k - 1} X^{k - 1} - \cdots - c_1 X - c_0 .
\end{equation*}
Classically, $\alpha_1, \ldots, \alpha_r$ and $k$ are called the \emph{roots} and the \emph{order} of $F$, respectively.
Furthermore, $F$ is said to be \emph{nondegenerate} if none the ratios $\alpha_i / \alpha_j$ ($i \neq j$) is a root of unity, and $F$ is said to be \emph{simple} if all the $f_1, \ldots, f_r$ are constant.
We refer the reader to~\cite[Ch.~1--8]{MR1990179} for the general theory of linear recurrences.

Hereafter, let $F$ and $G$ be linear recurrences and let $\mathfrak{R}$ be a finitely generated subring of~$\mathbb{C}$.
Assume also that the roots of $F$ and $G$ together generate a multiplicative torsion-free group.
This ``torsion-free'' hypothesis is not a loss of generality.
Indeed, if the group generated by the roots of $F$ and $G$ has torsion order $q$, then for each $r = 0,1,\ldots,q-1$ the roots of the linear recurrences $F_r(n) = F(qn + r)$ and $G_r(n) = G(qn + r)$ generate a torsion-free group.
Therefore, all the results in the following can be extended just by partitioning $\mathbb{N}$ into the arithmetic progressions of modulo $q$ and by studying each pair of linear recurrences $F_r, G_r$ separately.
Finally, define the following set of natural numbers
\begin{equation*}
\mathcal{N} := \left\{n \in \mathbb{N} : G(n) \neq 0, \; F(n) / G(n) \in \mathfrak{R} \right\} .
\end{equation*}
Regarding the condition $G(n) \neq 0$, note that, by the ``torsion-free'' hypothesis, $G(n)$ is nondegenerate and hence the Skolem--Mahler--Lech Theorem~\cite[Theorem~2.1]{MR1990179} implies that $G(n) = 0$ only for finitely many $n \in \mathbb{N}$.
In the sequel, we shall tacitly disregard such integers.

Divisibility properties of linear recurrences have been studied by several authors.
A classical result, conjectured by Pisot and proved by van der Poorten, is the Hadamard-quotient Theorem, which states that if $\mathcal{N}$ contains all sufficiently large integers, then $F / G$ is itself a linear recurrence~\cite{MR990517, MR929097}.

Corvaja and Zannier~\cite[Theorem~2]{MR1918678} gave the following wide extension of the Hadamard-quotient Theorem (see also~\cite{MR1692189} for a previous weaker result by the same authors).

\begin{thm}\label{thm:infinite}
If $\mathcal{N}$ is infinite, then there exists a nonzero polynomial $P \in \mathbb{C}[X]$ such that both the sequences $n \mapsto P(n) F(n) / G(n)$ and $n \mapsto G(n) / P(n)$ are linear recurrences.
\end{thm}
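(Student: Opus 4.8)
The plan is to use the Subspace Theorem, in the style of Corvaja and Zannier, followed by a descent on the number of distinct roots of $G$. The coefficients of $F,G$ and the roots $\alpha_i,\beta_j$ lie in a finitely generated extension of $\mathbb{Q}$, so by a sufficiently generic specialization I would first reduce to the case in which $\mathfrak{R}$ is a ring $\mathcal{O}_S$ of $S$-integers of a number field $\mathbb{K}$, with $S$ a finite set of places containing the archimedean ones; one must check that the specialization can be chosen to preserve nondegeneracy, the torsion-free hypothesis, and the infinitude of $\mathcal{N}$, and so that a factorization obtained over $\mathbb{K}$ lifts back to $\mathbb{C}$. Enlarging $S$, I may assume every $\alpha_i,\beta_j$ is an $S$-unit and every coefficient of the $f_i,g_j$ is in $\mathcal{O}_S$. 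Then, by pigeonhole, there is a term $g_1(n)\beta_1^n$ of $G$ and an archimedean place $v$ with $|g_1(n)\beta_1^n|_v$ maximal among all terms of $G$ for every $n$ in an infinite subset $\mathcal{N}'\subseteq\mathcal{N}$; dividing $F$ and $G$ by $\beta_1^n$ (which alters neither the ratio, nor $\mathcal{N}$, nor the torsion-free hypothesis, nor the $S$-unit property of the roots) I may take $\beta_1=1$, so that $g_1(n)$ dominates $G(n)$ at $v$ along $\mathcal{N}'$.

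\emph{The Subspace Theorem.} Put $\xi_n:=F(n)/G(n)\in\mathcal{O}_S$ for $n\in\mathcal{N}'$. Consider the points $\mathbf{x}_n$ whose coordinates are the monomials $n^{e}\alpha_i^n$ and $n^{e}\,\xi_n\beta_j^n$ (bounded $e$; the powers of $n$ are adjoined so that the polynomial coefficients $f_i,g_j$ enter as scalars, their subexponential growth being harmless). These $\mathbf{x}_n$ all satisfy the fixed linear relation coming from $F(n)=\xi_n G(n)$, hence lie in a fixed proper subspace $V_0$. Working inside $V_0$, I would apply the Subspace Theorem (in its form over a number field, with the finite set of places $S$) to a system of linear forms chosen so that, using $|\xi_n|_w\le 1$ for $w\notin S$ and the dominance of $g_1$ at $v$, the product over the places of $S$ of the absolute values of the forms at $\mathbf{x}_n$ is small for every $n\in\mathcal{N}'$. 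Since the output subspaces are then proper subspaces of $V_0$ and $\mathcal{N}'$ is infinite, infinitely many $\mathbf{x}_n$ lie in one such $V_1\subsetneq V_0$.

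\emph{Descent.} Membership in $V_1$ gives, for infinitely many $n$, a linear relation $A(n)=\xi_n B(n)$ with $A=\sum_i a_i(n)\alpha_i^n$ and $B=\sum_j b_j(n)\beta_j^n$ recurrences (roots among those of $F$, resp. $G$), and independent of the trivial $F=\xi_n G$. The torsion-free hypothesis makes all these recurrences nondegenerate, so $F(n)B(n)=G(n)A(n)$, holding for infinitely many $n$, is in fact an identity by the Skolem--Mahler--Lech theorem; hence $A(n)=\xi_n B(n)$ for all $n$ with $G(n)\neq 0$. Writing $g_1,b_1$ for the coefficients of $\beta_1^n=1$ in $G,B$, set $G'(n):=b_1(n)G(n)-g_1(n)B(n)$ and $F'(n):=b_1(n)F(n)-g_1(n)A(n)$: then $G'$ is a recurrence whose roots are among $\beta_2,\ldots$ --- one fewer than $G$ --- with $G'\not\equiv 0$ (precisely because the relation was nontrivial), and $F'(n)=\xi_n G'(n)$, so $F'/G'=F/G$. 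Replacing $(F,G)$ by $(F',G')$, re-normalizing, and iterating, I descend to the case $G(n)=g(n)\gamma^n$ with a single root, where $P:=g$ works: $P(n)F(n)/G(n)=F(n)/\gamma^n$ and $G(n)/P(n)=\gamma^n$ are linear recurrences, and since $F/G$ is preserved throughout, this proves the theorem over $\mathbb{C}$.

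\emph{Main obstacle.} The heart of the matter is the Subspace Theorem step and its interface with the descent: choosing the auxiliary linear forms so that $S$-integrality of $\xi_n$ genuinely produces the required smallness, while ensuring (via working inside $V_0$) that the subspace $V_1$ it returns is a \emph{proper} subspace of $V_0$, so that the descent strictly lowers the number of roots of $G$ rather than looping. Roots of equal $v$-absolute value are the technical nuisance throughout --- then ``dominance'' only holds up to polynomial factors, and torsion-freeness must be invoked to exclude spurious cancellation --- as is the bookkeeping needed to confirm that nondegeneracy, the torsion-free hypothesis, and the infinitude of the good set all persist under the descent.
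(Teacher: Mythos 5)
This statement is not proved in the paper at all: it is Corvaja and Zannier's theorem (Theorem~2 of \emph{Finiteness of integral values for the ratio of two linear recurrences}, Invent.\ Math.\ \textbf{149} (2002)), quoted here with a citation, so the only possible comparison is with that source. Your outline does follow the Corvaja--Zannier strategy (specialization to a number field, Subspace Theorem, descent on the number of roots of $G$), but the step you yourself flag as the ``main obstacle'' is genuinely missing, and it is the heart of the proof. The single linear relation you exhibit among your coordinates $n^e\alpha_i^n$ and $n^e\xi_n\beta_j^n$, namely the one coming from $F(n)=\xi_nG(n)$, vanishes \emph{identically} on the points $\mathbf{x}_n$. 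A form that is identically zero on the solution set cannot be the source of the smallness required in the double product $\prod_{w\in S}\prod_i|L_{i,w}(\mathbf{x}_n)|_w<H(\mathbf{x}_n)^{-\epsilon}$: either you exclude it (and then the remaining coordinate forms give a product of size roughly $H(\mathbf{x}_n)^{O(1)}$ with no gain), or you include it and the conclusion ``the solutions lie in finitely many proper subspaces'' is trivially satisfied by $V_0$ itself and yields no new relation. The missing idea is Corvaja--Zannier's truncated expansion of $1/G(n)$: writing $G(n)=g_1(n)(1+u_n)$ with $u_n=\sum_{j\ge2}(g_j(n)/g_1(n))\beta_j^n$ small at the dominant place $v$ along $\mathcal{N}'$, one expands $(1+u_n)^{-1}$ as a geometric series truncated at order $k$, obtaining an \emph{approximate} linear relation between $\xi_n$ and monomials of the form $n^e\alpha_i^n\beta_{j_1}^n\cdots\beta_{j_t}^n$ whose value at $v$ is small but \emph{nonzero}. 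That auxiliary form, combined with $|\xi_n|_w\le1$ for $w\notin S$, is what makes the Subspace Theorem bite and produces a relation genuinely independent of $F=\xi_nG$. Without specifying this (or an equivalent) construction, the proposal does not constitute a proof.

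Two further points are glossed over. First, in the descent you assert that $G'=b_1G-g_1B\not\equiv0$ ``precisely because the relation was nontrivial''; but $G'\equiv0$ corresponds to $B$ and $A$ being \emph{rational-function} multiples of $G$ and $F$, which is not immediately excluded by linear independence of the relation over the constants --- ruling this out needs nondegeneracy plus a Skolem--Mahler--Lech argument. Second, at the end of the descent you obtain $F/G=H/g$ with $H$ a recurrence and $g$ a polynomial, which gives that $n\mapsto g(n)F(n)/G(n)$ is a recurrence; but the theorem also requires $n\mapsto G(n)/P(n)$ to be a recurrence for the \emph{original} $G$, i.e.\ that (a suitable divisor of) $g$ divides every polynomial coefficient of $G$. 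This does not follow from $F/G=H/g$ alone and requires an additional argument. These are repairable, but together with the missing Subspace Theorem construction they leave the proposal as a plan rather than a proof.
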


The proof of Theorem~\ref{thm:infinite} makes use of the Schmidt's Subspace Theorem.
We refer the reader to~\cite{MR2487729} for a survey on several applications of the Schmidt's Subspace Theorem in Number~Theory.

Let $\mathbb{K}$ be a number field.
For the sake of simplicity, from now on we shall assume that $\mathfrak{R} \subseteq \mathbb{K}$ and that $F$ and $G$ have coefficients and values in $\mathbb{K}$.
Corvaja and Zannier~\cite[Corollary~2]{MR1918678} proved also the following theorem about the set $\mathcal{N}$.
\begin{thm}\label{thm:zerodensity}
If $F / G$ is not a linear recurrence, then $\mathcal{N}$ has zero asymptotic density.
\end{thm}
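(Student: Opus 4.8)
The plan is to argue by contradiction, using Theorem~\ref{thm:infinite} to reduce to a divisibility problem for a single linear recurrence and then attacking it with a sieve; in any case the statement is subsumed by the quantitative bound announced in the abstract, so what follows is only a self-contained route. Suppose then that $F/G$ is not a linear recurrence while $\mathcal{N}$ has positive upper density.

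The first step is a normalisation. Since $\mathcal{N}$ is infinite, Theorem~\ref{thm:infinite} gives a nonzero $P \in \mathbb{C}[X]$ (which we may take to have coefficients in $\mathbb{K}$) with $L_1 := P \cdot F/G$ and $L_2 := G/P$ linear recurrences; comparing exponential-polynomial expansions for all large $n$ yields the identities $F = L_1 L_2$ and $G = P L_2$, so $F(n)/G(n) = L_1(n)/P(n)$ for all large $n$. Writing $L_1 = \sum_k p_k(X)\,\delta_k^{X}$ and $Q := \gcd_k p_k$ for its content, I replace $(L_1, P)$ by $\bigl(L_1/\gcd(P,Q),\ P/\gcd(P,Q)\bigr)$: after this the polynomial is coprime to the content of the recurrence, the identity $F/G = L_1/P$ persists, and $F/G$ is a linear recurrence precisely when $P$ is constant. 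Hence, discarding finitely many $n$, we have $\mathcal{N} = \{\, n : L(n)/P(n) \in \mathfrak{R}\,\}$ for a linear recurrence $L$ and a \emph{non-constant} $P \in \mathbb{K}[X]$ coprime to the content of $L$, and it suffices to prove that such an $\mathcal{N}$ has zero density. A purely soft continuation — restricting $\mathcal{N}$ to an arithmetic progression and re-applying Theorem~\ref{thm:infinite} — stalls here, because on the pair $(L,P)$ that theorem can only return the trivial polynomial and so yields no new information.

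The second step turns integrality into congruences. A finitely generated subring of $\mathbb{K}$ lies in the ring $\mathcal{O}_S$ of $S$-integers for a suitable finite set of places $S$, so $n \in \mathcal{N}$ forces $\operatorname{ord}_{\mathfrak{p}}(L(n)) \ge \operatorname{ord}_{\mathfrak{p}}(P(n))$ for every finite place $\mathfrak{p} \notin S$; hence, for every prime ideal $\mathfrak{p}$ outside a fixed finite exceptional set, $\mathfrak{p} \mid P(n)$ implies $\mathfrak{p} \mid L(n)$, i.e. $n$ must be a common zero modulo $\mathfrak{p}$ of the reductions $\overline{P}$ and $\overline{L}$. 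Now I would fix a window of prime ideals of norm in $(y,z]$, with $y,z \to \infty$ slowly in terms of $x$, and combine two counts: a standard sieve shows that all but $o(x)$ of the integers $n \le x$ have a prime-ideal factor of $P(n)$ of norm in $(y,z]$; while the primes $\mathfrak{p}$ of norm in $(y,z]$ for which $\overline{P}$ and $\overline{L}$ share a zero are very sparse — and it is here that the hypothesis that $L/P$ is not a linear recurrence (equivalently, $P \nmid Q$) is finally used. A Tur\'an-type second-moment estimate over $n \le x$ then forces all but $o(x)$ of the $n \le x$ to lie outside $\mathcal{N}$, contradicting the positivity of the upper density and completing the proof.

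The step I expect to be the main obstacle is that last sparsity estimate: controlling, uniformly over a wide range of prime ideals $\mathfrak{p}$, how often the zero set of $\overline{P}$ can meet the zero set of $\overline{L}$ modulo $\mathfrak{p}$ when $P$ does not divide the content of $L$. Making this precise requires understanding the period of $\overline{L}$ — in particular treating separately the case of non-constant coefficient polynomials $p_k$, where the residue characteristic itself divides the period — bounding the number of zeros of $\overline{L}$ within one period, and then optimising the window $(y,z]$ against $x$; it is exactly this optimisation that produces the exponent $h$ (and the residual factor $\log\log x$) of the main theorem.
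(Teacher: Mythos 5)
Your overall architecture coincides with the one the paper uses to prove Theorem~\ref{thm:main}, which subsumes this statement (the paper itself only cites Corvaja--Zannier for Theorem~\ref{thm:zerodensity}): reduce via Theorem~\ref{thm:infinite} to $F/G = L/P$ with $P$ a nonconstant polynomial coprime to the content of $L$, pass to $S$-integers so that $\mathfrak{p} \mid P(n)$ forces $\mathfrak{p} \mid L(n)$ for all $\mathfrak{p}$ outside a fixed finite set, sieve away the $n \le x$ for which $P(n)$ has no prime factor in a window $(y,z]$, and show that the surviving $n$ rarely satisfy the forced congruence on $L$. Your normalisation step is correct and matches the paper's, and it is genuinely needed later (to guarantee that the $f_i(\ell)$ are not all divisible by $\mathfrak{p}$ when $\mathfrak{p} \mid P(\ell)$).

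The second half, however, contains a genuine gap, and one intermediate claim is wrong as stated. The assertion that ``the primes $\mathfrak{p}$ of norm in $(y,z]$ for which $\overline{P}$ and $\overline{L}$ share a zero are very sparse'' is false: for $L(n) = 2^n - 2$ and $P(n) = n$, \emph{every} prime $p$ admits some $n$ (namely $n = mp$ with $m \equiv 1 \pmod{\operatorname{ord}_p 2}$) satisfying $p \mid P(n)$ and $p \mid L(n)$. The quantity that must be bounded is not a count of primes but a per-prime density of $n$: writing $n = \ell + mp$ with $\mathfrak{p} \mid P(\ell)$ and reducing $f_i(n) \equiv f_i(\ell) \pmod{\mathfrak{p}}$, the condition $\mathfrak{p}\mid L(n)$ becomes $\sum_i f_i(\ell)\,\alpha_i^{\ell}\,(\alpha_i^{p})^{m} \equiv 0 \pmod{\mathfrak{p}}$, and one needs that this power sum has few zeros $m$ per period, uniformly over the window. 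That requires exactly the two inputs you do not supply: (i) for all but $O(x^{1/2})$ primes $p \le x$, every ratio $\alpha_i/\alpha_j$ has multiplicative order at least $p^{1/4}$ modulo every prime ideal above $p$ (Lemma~\ref{lem:ponequarter}); and (ii) the bound $4(q-1)N^{-1/2^{r-2}}$ on the number of zeros of $\sum_i c_i a_i^{m}$ in $\mathbb{F}_q$ in terms of the minimal order $N$ of the $a_i/a_j$ (Lemma~\ref{lem:finitefield}, from the appendix of Corvaja--Zannier). Without (ii) --- which you yourself flag as ``the main obstacle'' --- the argument does not close, so the proposal is a correct strategy rather than a proof. (By contrast, the first sieve step is unproblematic: Kronecker's theorem as in Lemma~\ref{lem:zerosmodp} plus a Tur\'an second moment, or the large sieve as in Lemma~\ref{lem:sieve}, does show that almost all $n$ have a prime factor of $N_{\mathbb{K}}(P(n))$ in the window.)
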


We recall that a set of natural numbers $\mathcal{S}$ has zero asymptotic density if $\#\mathcal{S}(x) / x \to 0$, as $x \to +\infty$, where we define $\mathcal{S}(x) := \mathcal{S} \cap [1, x]$ for all $x \geq 1$.

Corvaja and Zannier also suggested~\cite[Remark p.~450]{MR1918678} that their proof of Theorem~\ref{thm:zerodensity} could be adapted to show that if $F/G$ is not a linear recurrence then
\begin{equation}\label{equ:czubound}
\#\mathcal{N}(x) \ll \frac{x}{(\log x)^{\delta}} ,
\end{equation}
for any $\delta < 1$ and for all sufficiently large $x > 1$, where the implied constant depends on $\mathbb{K}$.

In our main result we obtain a more precise upper bound than (\ref{equ:czubound}).
Before state it, we mention some special cases of the problem of bounding $\#\mathcal{N}(x)$ that have already been studied.

Alba Gonz{\'a}lez, Luca, Pomerance, and Shparlinski~\cite[Theorem~1.1]{MR2928495} proved the following:
\begin{thm}
If $F$ is a simple nondegenerate linear recurrence over the integers, $r \geq 2$, $G(n) = n$, and $\mathcal{R} = \mathbb{Z}$, then
\begin{equation*}
\#\mathcal{N}(x) \ll \frac{x}{\log x} ,
\end{equation*}
for all sufficiently large $x > 1$, where the implied constant depends only on $r$.
\end{thm}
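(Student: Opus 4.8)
The plan is to run a sieve over the prime factorization of $n$, exploiting that each prime $p\mid n$ (outside a fixed finite set depending on $F$) forces $F(n)$ to lie in a thin set of residues modulo $p$; one then checks that the congruence conditions coming from the various prime factors of $n$ are restrictive enough to cut the count down to $O(x/\log x)$, the primes $n$ themselves being (essentially) the extremal contribution.

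First I would set up the local input. Let $\mathbb{K}$ be the number field generated by the roots and coefficients of $F$, put $\kappa=[\mathbb{K}:\mathbb{Q}]$, and let $D$ be the fixed product of the rational primes lying below the primes of $\mathcal{O}_{\mathbb{K}}$ dividing some $a_i$, some $\alpha_i^{\pm1}$, or some $\alpha_i-\alpha_j$ ($i\neq j$), together with a fixed denominator of the $\alpha_i$. Discarding the $n\le x$ that are $D$-smooth, that are perfect prime powers, that satisfy $\Omega(n)>10\log\log x$, or that are $\le x^{1/2}$ costs only $O_F\bigl(x/(\log x)^{2}\bigr)$ in total, so one may assume $n$ is composite, $\omega(n)\ge2$, and $n$ has a prime factor $P\nmid D$. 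For a prime $\mathfrak{p}$ of $\mathcal{O}_{\mathbb{K}}$ not dividing the fixed ideal above, with residue field $\mathbb{F}_q$, $q=p^{f}$, $f\le\kappa$, reduction modulo $\mathfrak{p}$ gives a purely periodic sequence $\overline{F}(m)=\sum_i\bar a_i\bar\alpha_i^{\,m}$ with all $\bar a_i\ne0$ and all $\bar\alpha_i\in\mathbb{F}_q^{\times}$ distinct; a Vandermonde determinant shows $\overline{F}$ cannot vanish at $r$ consecutive arguments, so $F(n)\equiv0\pmod{\mathfrak{p}}$ confines $n$ modulo the period $L(\mathfrak{p})$ to a set of density $\le(r-1)/r$, which in the model case $r=2$ is a single residue class modulo $e(\mathfrak{p}):=\operatorname{ord}_{\mathfrak{p}}(\alpha_1/\alpha_2)$. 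Since $F$ is nondegenerate some ratio $\alpha_i/\alpha_j$ is not a root of unity, and bounding $N_{\mathbb{K}/\mathbb{Q}}$ of $(\alpha_i/\alpha_j)^{e}-1$ (cleared of denominators) from above by $(NC)^{\kappa e}$ with $NC>1$ gives $e(\mathfrak{p})\gg_F\log p$, and more generally $L(\mathfrak{p})\gg_F\log p$ (it is a common multiple of the orders of the $\bar\alpha_i/\bar\alpha_j$); also $L(\mathfrak{p})\mid q-1$ is coprime to $p$.

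Now the counting. Write $n=P^{a}u$ with $P=P^{+}(n)\nmid D$, $\gcd(P,u)=1$, $P^{+}(u)<P$, and $u\ge2$ (if $u=1$ then $n=P$ is prime, and such $n$ number exactly $\pi(x)\sim x/\log x$, which is admissible — indeed sharp, as $F(n)=2^{n}-2$ puts every odd prime into $\mathcal{N}$). Fixing $u$ and $a$, the hypothesis $n\in\mathcal{N}$ together with $u\mid n\mid F(n)$ makes each prime $q\mid u$ with $q\nmid D$ impose, via the local input, a constraint on $P^{a}u$, hence (for $r=2$) pins $P^{a}$, and so $P$, to a bounded number of residue classes modulo $e(\mathfrak{q})\gg_F\log q$; Brun–Titchmarsh then bounds the number of admissible largest primes $P\le(x/u)^{1/a}$ by $\ll x/\bigl(u\,\phi(e(\mathfrak{q}))\log(x/u)\bigr)$ when $u\,e(\mathfrak{q})<x^{1-\delta}$, and trivially by $\ll x/(u\,e(\mathfrak{q}))+1$ otherwise. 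Summing over $u$ and a suitable choice of $q\mid u$, using $\phi(e(\mathfrak{q}))\gg_F\log q/\log\log\log q$ and $\sum_q(\log\log\log q)/(q\log q)=O(1)$, gives $O_F(x/\log x)$ for the range where the largest prime factor dominates; the complementary range, where $P$ is small, is controlled by a smooth‑number estimate; and the $\Omega$-large and general-$r$ cases are treated by iterating the peeling over several prime factors of $n$, with ``one residue class'' replaced by ``density $\le(r-1)/r$''.

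I expect the genuine obstacle to lie precisely here: arranging the peeling so that the congruences coming from the smaller prime factors of $n$ really do cut the count of the remaining large prime factor by a full power of $\log x$ — not merely by a constant or by $\log\log x$ — while keeping every implied constant depending on $r$ alone (the dependence on $F$ being allowed only in the threshold ``$x$ sufficiently large''). This requires controlling the gcd's between the multiplicative orders $e(\mathfrak{q})$ and the other prime factors of $n$, balancing the Brun–Titchmarsh regime against the trivial arithmetic‑progression regime so that neither the middle range $(\log x)^{O(1)}<P<x^{\delta}$ nor the cluster of $n$ with two moderately large prime factors leaks an $O(x)$ or $O(x\log\log x)$ term, and, for $r>2$, extracting from $\overline{F}\equiv0\pmod{\mathfrak{q}}$ a genuinely large modulus in place of the bare density bound.
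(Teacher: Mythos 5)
First, a point of order: the paper does not prove this statement — it is quoted from Alba Gonz\'alez--Luca--Pomerance--Shparlinski — so I am judging your argument on its own terms and against the paper's general method. Your local ingredients are sound (the lower bound $e(\mathfrak{q})\gg_F\log q$ via norms of $(\alpha_i/\alpha_j)^e-1$, the density $\le (r-1)/r$ from the absence of $r$ consecutive zeros, Brun--Titchmarsh), but the global summation does not close, and it fails exactly where you yourself flag ``the genuine obstacle''; the proposal is therefore incomplete at its decisive step. Concretely: after fixing the cofactor $u$ and bounding the number of admissible largest primes $P\le x/u$ by $\ll x/\bigl(u\,\phi(e(\mathfrak{q}_u))\log x\bigr)$, you must sum over $u$. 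Taking $q_u=P^+(u)$ and writing $u=q^b v$ with $P^+(v)\le q$, the sum factors as
\begin{equation*}
\sum_{u}\frac{1}{u\,\phi(e(\mathfrak{q}_u))}\;\asymp\;\sum_{q}\frac{1}{q\,\phi(e(\mathfrak{q}))}\sum_{P^+(v)\le q}\frac{1}{v}\;\asymp\;\sum_{q}\frac{\log q}{q\,\phi(e(\mathfrak{q}))},
\end{equation*}
since the $q$-smooth cofactors contribute $\prod_{p\le q}(1-1/p)^{-1}\asymp\log q$. With only the pointwise bound $\phi(e(\mathfrak{q}))\gg\log q/\log\log\log q$ this is $\gg\sum_q(\log\log\log q)/q\asymp\log\log x\cdot\log\log\log x$: your displayed convergent sum $\sum_q(\log\log\log q)/(q\log q)$ has dropped the factor $\log q$ coming from the smooth part of $u$. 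As written, the method yields at best $x(\log\log x)^{1+o(1)}/\log x$. To rescue it one needs a \emph{statistical}, not pointwise, lower bound on the orders --- e.g.\ the standard fact that at most $O_F(T^2)$ primes $q$ admit a prime ideal $\mathfrak{q}$ with $\operatorname{ord}_{\mathfrak{q}}(\alpha_i/\alpha_j)\le T$ (the mechanism behind Lemma~\ref{lem:ponequarter}), which does make $\sum_q\log q/(q\,\phi(e(\mathfrak{q})))$ converge. Nothing of this sort appears in your write-up.

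There are secondary leaks as well. The congruence $P^a\equiv c\pmod{e}$ does not pin $P$ to boundedly many classes when $a\ge2$ (the number of $a$-th roots modulo $e$ is unbounded); this is harmless only after separately discarding the $n$ with $P^+(n)^2\mid n$. When $u$ is large, $\log(x/u)$ can be as small as $\log x/\log\log x$ (your pruning only guarantees $P\ge x^{1/(20\log\log x)}$), so Brun--Titchmarsh alone loses another $\log\log x$ there; in that range one must exploit the congruence at $P$ itself via a finite-field zero bound such as Lemma~\ref{lem:finitefield}. And for $r>2$ the density $(r-1)/r$ per small prime is only a constant saving, so iterating it over the $O(\log\log x)$ prime factors of $n$ can never manufacture a full factor $1/\log x$; the required input is again of the type of Lemma~\ref{lem:finitefield}, converting the order lower bound $N$ into a zero-density $\ll N^{-1/2^{r-2}}$. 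Note that the paper's own proof of Theorem~\ref{thm:main}, specialized to $G(n)=n$, gives only $x\log\log x/\log x$; removing that $\log\log x$ is precisely the point at which these additional inputs become unavoidable.
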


For $G(n) = n$ and $\mathcal{R} = \mathbb{Z}$, a still better upper bound can be given if $F$ is a Lucas sequence, that is, $F(0) = 0$, $F(1) = 1$, and $F(n + 2) = a F(n + 1) + b F(n)$, for all $n \in \mathbb{N}$ and some fixed integers $a$ and $b$.
In such a case the arithmetic properties of $\mathcal{N}$ were first investigated by Andr{\'e}-Jeannin~\cite{MR1131414} and Somer~\cite{MR1271392, MR1393479}.
Luca and Tron~\cite{MR3409327} studied the case in which $F$ is the sequence of Fibonacci numbers ($a = b = 1$) and Sanna~\cite{San16}, using some results on the $p$-adic valuation of Lucas sequences \cite{San16bis}, generalized Luca and Tron's result to the following upper bound. 

\begin{thm}
If $F$ is a nondegenerate Lucas sequences, $G(n) = n$, and $\mathcal{R} = \mathbb{Z}$, then
\begin{equation*}
\#\mathcal{N}(x) \leq x^{1 - \left(\frac1{2} + o(1)\right)\frac{\log\log\log x}{\log\log x}} ,
\end{equation*}
as $x \to +\infty$, where the $o(1)$ depends on $F$.
\end{thm}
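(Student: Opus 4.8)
The plan is to translate the divisibility $n \mid F(n)$ into a combinatorial condition on the prime factorization of $n$ by means of the precise description of the $p$-adic valuation of Lucas sequences developed in~\cite{San16bis}, and then to count the integers meeting that condition by an argument of smooth-number type.

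Recall that for a prime $p$ not dividing $b$ there is a rank of apparition $\rho(p) = \min\{m \ge 1 : p \mid F(m)\}$, that $p \mid F(n)$ holds if and only if $\rho(p) \mid n$, and --- the decisive arithmetic input --- that $\rho(p)$ divides $p - \left(\tfrac{D}{p}\right)$, where $D = a^2 + 4b$; in particular $\rho(p) \le p + 1$. Moreover, for odd $p \nmid bD$ one has $v_p(F(n)) = v_p(F(\rho(p))) + v_p(n)$ whenever $\rho(p) \mid n$, with $v_p(F(\rho(p))) \ge 1$, and there are analogous explicit formulas for $p = 2$ and for $p \mid D$, while any prime dividing $b$ but not $a$ fails to divide $F(n)$ for every $n \ge 1$. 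The first step is to use this to see that, since here $\mathcal{N} = \{n : n \mid F(n)\}$, membership $n \in \mathcal{N}$ forces every prime factor of $n$ outside the fixed finite set $S$ of primes dividing $2bD$ to satisfy merely $\rho(p) \mid n$ --- the inequality $v_p(F(n)) \ge v_p(n)$ being automatic once $v_p(F(\rho(p))) \ge 1$ --- whereas the $S$-part of $n$ contributes at most $(\log x)^{O(1)}$ possibilities and may be split off. Hence it suffices to bound the number of $m \le x$, coprime to every prime of $S$, such that $\rho(p)$ divides $m$ (times a fixed bounded factor) for every prime $p \mid m$.

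Next I would peel off the largest prime. If $p = P^{+}(m)$ then $\rho(p) \mid m$; being coprime to $p$ and at most $p + 1$, $\rho(p)$ has all of its prime factors strictly below $p$, for a prime factor $\ge p$ would force $\rho(p) = p + 1$ to be prime and divide $m$, contradicting $p = P^{+}(m)$. Thus $m = p^{a} m'$ with $P^{+}(m') < p$, with $\rho(p) \mid m'$ so that $m' \ge \rho(p)$, and with $m'$ again satisfying the same condition; iterating produces the distinct primes $p_1 > \cdots > p_t$ of $m$, with exponents $a_i$, such that $\rho(p_i) \mid \prod_{j > i} p_j^{a_j}$ for every $i$. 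I would then count in two regimes governed by a parameter $y = y(x)$: the $m \le x$ with $P^{+}(m) \le y$ are $y$-smooth, so number at most $\Psi(x, y) = x\, u^{-u(1 + o(1))}$ with $u = \log x / \log y$; the $m$ with $P^{+}(m) = p > y$ are controlled through the chain, using that for a given integer $e$ there are at most $\omega(F(e)) \ll e$ primes $p$ with $\rho(p) = e$ (because $F(e) \le C^{e}$), together with the size constraint $p^{a} m' \le x$, summed over admissible chains. Choosing $y = \exp\!\big(c\,(\log\log x)^{2}/\log\log\log x\big)$ for a suitable constant $c > 0$ balances the two contributions and yields the exponent $1 - \left(\tfrac{1}{2} + o(1)\right)\log\log\log x / \log\log x$.

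The hard part is the second regime. The chain of conditions $\rho(p_i) \mid \prod_{j>i} p_j^{a_j}$ is considerably weaker than $m$ being genuinely smooth, so extracting from it a bound that matches the target --- and in particular pinning down the constant $\tfrac{1}{2}$ rather than settling for a mere $1 - o(1)$ in the exponent --- calls for a careful level-by-level analysis of how much the lower part $\prod_{j > i} p_j^{a_j}$ must grow in order to contain $\rho(p_i)$, fed into the prime-counting estimate above and optimized against the smooth-number count. Checking that the exceptional primes in $S$, and the possible degeneracies in which $\rho(p)$ is anomalously small, can be absorbed without degrading the exponent is a further, more routine layer, hidden inside the $(\log x)^{O(1)}$ and $o(1)$ terms.
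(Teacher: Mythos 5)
First, a remark on context: this theorem is not proved in the present paper at all --- it is quoted from \cite{San16}, which in turn generalizes the Fibonacci case of Luca and Tron \cite{MR3409327}. Your outline does follow the strategy of those papers: reduce $n \mid F(n)$ to the condition ``$\rho(p) \mid n$ for every prime $p \mid n$ outside a fixed finite set $S$'' via the valuation formula $v_p(F(n)) = v_p(F(\rho(p))) + v_p(n)$ from \cite{San16bis}; exploit $\rho(p) \mid p - \left(\tfrac{D}{p}\right)$ to see that $\rho(P^+(n))$ is supported on strictly smaller primes, yielding the chain $\rho(p_i) \mid \prod_{j>i} p_j^{a_j}$; and split at a threshold $y$ between $y$-smooth integers (bounded by $\Psi(x,y) = x u^{-u(1+o(1))}$) and the rest. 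That skeleton, and your choice of $y$, are consistent with the actual proof.

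The genuine gap is that you have not carried out the count in the second regime, and that is where the entire theorem lives: describing it as ``a careful level-by-level analysis \ldots optimized against the smooth-number count'' states the problem rather than solving it. Concretely, what is needed is a quantitative recursive estimate: for non-smooth $n \le x$ write $n = p^a m'$ with $p = P^+(n) > y$ and $\rho(p) \mid m'$; for fixed $m'$ the admissible $p$ divide $\prod_{e \mid m'} F(e)$, and since each exceeds $y$ their number is $\ll \bigl(\sum_{e \mid m'} e\bigr)/\log y \ll m' \log\log m' / \log y$; one must then sum $x/(p^a m')$ over these $p$ and over $m'$ ranging \emph{in the structured set itself} (a naive sum over all $m' \le x/p$ with $m'^{1+o(1)}$ choices of $p$ diverges), and iterate or induct on the number of levels. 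Without writing this down you cannot verify that the second regime is also $\le x^{1-(1/2+o(1))\log\log\log x/\log\log x}$ for your $y$, nor that the constant $1/2$ survives the optimization --- pinning down $1/2$ rather than $o(1)$ is precisely the content of \cite{MR3409327, San16}. Two smaller points also need actual arguments rather than a wave at ``$(\log x)^{O(1)}$'': the $S$-part of $n$ can be as large as $x$ (only the \emph{number} of possible $S$-parts is $(\log x)^{O(1)}$, and the divisibility condition at primes of $S$ is not automatic), and the primes with anomalously small $\rho(p)$ must be absorbed into the same recursion, not set aside.
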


Now we state the main result of this paper.

\begin{thm}\label{thm:main}
If $F / G$ is not a linear recurrence, then
\begin{equation*}
\#\mathcal{N}(x) \ll_{F,G} x \cdot \left(\frac{\log \log x}{\log x}\right)^h ,
\end{equation*}
for all $x \geq 3$, where $h$ is a positive integer depending on $F$ and $G$.
\end{thm}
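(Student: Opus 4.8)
The plan is to feed Theorem~\ref{thm:infinite} into a sieve. If $\mathcal{N}$ is finite there is nothing to prove, so assume $\mathcal{N}$ is infinite and let $P \in \mathbb{K}[X]$ be the polynomial provided by Theorem~\ref{thm:infinite}, so that $H(n) := P(n)F(n)/G(n)$ and $n \mapsto G(n)/P(n)$ are linear recurrences and $F(n)/G(n) = H(n)/P(n)$ for all but finitely many $n$. After dividing $P$ and $H$ by the greatest common divisor of $P$ with the coefficient-polynomials of $H$, the assumption that $F/G$ is not a linear recurrence forces $\deg P \geq 1$, and the torsion-free hypothesis makes $H$ nondegenerate. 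Fix a finite set $S$ of places of $\mathbb{K}$, containing the archimedean ones, such that $\mathfrak{R} \subseteq \mathcal{O}_{\mathbb{K},S}$, such that $H$ and $P$ have $S$-integral coefficients, and such that the roots of $H$ are $S$-units. If $H$ has a single characteristic root, then $n \in \mathcal{N}$ reduces, via a resultant bound, to a fixed polynomial value being an $S$-unit times a bounded factor, which happens for $\ll \log x$ values of $n \leq x$; so from now on assume $H$ has at least two roots.

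For $n \in \mathcal{N}$ and any prime ideal $\mathfrak{p} \notin S$, the relation $H(n)/P(n) \in \mathcal{O}_{\mathbb{K},S}$ gives $v_{\mathfrak{p}}(H(n)) \geq v_{\mathfrak{p}}(P(n))$; in particular every $\mathfrak{p} \notin S$ dividing $P(n)$ divides $H(n)$. Passing to the splitting field $\mathbb{L}$ of $P$ and restricting attention to degree-one primes, this says: for every root $\rho$ of $P$ and every rational prime $p$ outside a fixed finite set, if $n$ reduces to $\rho$ modulo some prime $\mathfrak{q}\mid p$ of $\mathbb{L}$, then $n$ must lie in the zero-set of $H$ modulo $\mathfrak{q}$. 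Thus, outside the $n$ for which this ``$H$-vanishing'' actually occurs, $\mathcal{N}$ avoids, for each such $p$, the residue classes modulo $p$ pinned down by the roots of $P$.

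The heart of the argument is an upper-bound sieve over primes $p$ up to a level $y$. By Chebotarev's density theorem the roots of $P$ occupy, on average over $p$, a fixed positive integer number $h$ of residue classes modulo $p$ (one may take $h$ to be the number of distinct irreducible factors of $P$ over $\mathbb{K}$, possibly corrected by the exceptional primes treated below); this $h$ depends only on $F$ and $G$ and is the exponent in the statement. A Brun- or Selberg-type sieve then bounds the number of $n \leq x$ avoiding all these classes by $\ll x \prod_{p \leq y}(1 - h/p + O(1/p^{2})) \asymp x/(\log y)^{h}$, and taking $\log y$ of order $\log x/\log\log x$ — as large as the sieve's error terms and the next paragraph allow — yields the desired $x(\log\log x/\log x)^{h}$.

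It remains to absorb the $n$ set aside above: those lying in the zero-set of $H$ modulo a prime dividing $P(n)$, and those where $H$ behaves abnormally modulo small primes. I expect this to be the main obstacle. For a prime $\mathfrak{q}$, the density of zeros of $H$ modulo $\mathfrak{q}$ is controlled by the multiplicative orders of the ratios of the roots of $H$ modulo $\mathfrak{q}$; since $H$ is nondegenerate with at least two roots and the group generated by all the roots is torsion-free, each such ratio has order $\leq D$ modulo only finitely many primes, for every $D$, so these orders are large outside a sparse set of $\mathfrak{q}$. Turning this into uniform bounds on the zero-set and the period of $H$ modulo $\mathfrak{q}$, and combining it with standard estimates on smooth values of polynomials, one should check that the $n$ with a large prime dividing both $P(n)$ and $H(n)$ number $\ll x^{1-c}$ for some $c>0$, and that the exceptional small primes contribute $\ll x/(\log x)^{h}$. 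The delicate point is the mismatch of scales between the congruence $\mathfrak{p}\mid P(n)$, which lives modulo $N\mathfrak{p}$, and the congruence $\mathfrak{p}\mid H(n)$, which lives modulo the period of $H$ and can involve a large power of $N\mathfrak{p}$; reconciling the two is exactly what caps the sieve level at $\log y \asymp \log x/\log\log x$ and accounts for the factor $(\log\log x)^{h}$ separating our bound from the conjecturally optimal $x/(\log x)^{h}$.
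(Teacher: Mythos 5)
Your skeleton is the paper's: reduce via Theorem~\ref{thm:infinite} to $F/G = H/P$ with $P$ a nonconstant polynomial, observe that every prime ideal $\mathfrak{p} \notin S$ dividing $P(n)$ must divide $H(n)$, sieve out the residue classes modulo $p$ on which the norm of $P$ vanishes, and control the exceptional $n$ by counting zeros of a simple linear recurrence over the residue field (Lemma~\ref{lem:finitefield}). But the step you yourself flag as ``the main obstacle'' is exactly where the proposal breaks, and your numerology for it does not work. For a single prime $p$ with $n \equiv \ell \pmod{p}$ lying in a bad class, the best available bound for the number of $n \leq x$ with $H(n) \equiv 0 \pmod{\mathfrak{q}}$ is $\ll x\, p^{-1-1/2^{r}}$ (Lemma~\ref{lem:finitefield} combined with the order bound $N \geq p^{1/4}$ furnished by Lemma~\ref{lem:ponequarter}); summing over sieving primes $p > y_0$ gives only $\ll x\, y_0^{-1/2^{r}}$, a saving by a power of $\log x$ when $y_0$ is a power of $\log x$ --- never $x^{1-c}$ unless $y_0$ is a power of $x$, in which case the sieve has essentially no primes left to work with. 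Worse, for bounded $p$ the finite-field bound is trivial and the set of $n$ in a bad class with $H(n) \equiv 0 \pmod{\mathfrak{q}}$ can genuinely have positive density, so your assertion that ``the exceptional small primes contribute $\ll x/(\log x)^h$'' is not merely unproved but false: those primes give no information and must be discarded from the sieve entirely, which your $\prod_{p \leq y}$ does not do.

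The correct resolution --- and the actual source of the $(\log\log x)^h$ --- is the opposite of your diagnosis. One sieves only by primes in $(y_0, z]$ with $y_0 = (\log x)^{2^{r}h}$, so that the exceptional set is $\ll x\, y_0^{-1/2^{r}} = x/(\log x)^h$, and $z = x^{1/(d+1)}$, so that the modulus $p(q-1) \leq p^{d+1}$ of the combined congruence $n = \ell + mp$, $H(n) \equiv 0 \pmod{\mathfrak{q}}$ stays below $x$; this disposes of your ``mismatch of scales'' without capping anything at $x^{1/\log\log x}$. The truncated sieve product $\prod_{y_0 < p \leq z}\left(1 - \#\Omega_p/p\right) \asymp (\log y_0/\log z)^h$ is what costs the factor $(\log\log x)^h$: it is the lower truncation, not the upper level, that is responsible. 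Two smaller points: the sieve dimension is the average of $\#\Omega_p$, which by Kronecker--Chebotarev is the number of irreducible factors over $\mathbb{Z}$ of the norm form $N_{\mathbb{K}/\mathbb{Q}}(P(X))$, and this can be strictly smaller than the number of irreducible factors of $P$ over $\mathbb{K}$ (e.g.\ $X^2+1$ over $\mathbb{Q}(i)$), so with your choice of $h$ the claimed exponent exceeds what the sieve delivers; and the count of $n \leq x$ with $G(n)$ an $S$-unit in your one-root case is $\ll (\log x)^{\#S}$ rather than $\ll \log x$, though that discrepancy is harmless.
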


Both the positive integer $h$ and the implied constant in the bound of Theorem~\ref{thm:main} are effectively computable, we give the details in the last section.
In particular, we have the following corollary.

\begin{cor}\label{cor:integers}
If $F/G$ is not a linear recurrence, $G \in \mathbb{Z}[X]$, and $\gcd(G, f_1, \ldots, f_r) = 1$, then $h$ can be taken as the number of irreducible factors of $G$ in $\mathbb{Z}[X]$ (counted without multiplicity).
\end{cor}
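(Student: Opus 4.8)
The plan is to unwind the effective description of the exponent $h$ produced by the proof of Theorem~\ref{thm:main} (recorded in the last section) and to verify that, under the hypotheses of the corollary, it may be taken to equal $\omega$, the number of distinct irreducible factors of $G$ in $\mathbb{Z}[X]$. Write $G = c\,G_1^{e_1}\cdots G_\omega^{e_\omega}$ with $c \in \mathbb{Q}^{*}$ and $G_1, \dots, G_\omega \in \mathbb{Z}[X]$ pairwise nonassociate primitive irreducible polynomials. Since $F/G$ is not a linear recurrence, $G$ is nonconstant, so $\omega \ge 1$ and $h := \omega$ is a genuine positive integer; also we may assume that $F$ is not itself a polynomial, since in that case $\mathcal{N}$ is trivially sparse. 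The first step is to convert membership in $\mathcal{N}$ into congruence information: after discarding a finite set $\mathcal{B}$ of rational primes---determined by $\mathfrak{R}$, by the leading coefficients of the $G_j$ and their pairwise resultants, and by the bad reduction of $F$---one checks that if $n \in \mathcal{N}$ then every prime $p \notin \mathcal{B}$ dividing $G(n)$ satisfies $v_{\mathfrak{p}}\bigl(F(n)\bigr) \ge v_{\mathfrak{p}}\bigl(G(n)\bigr) > 0$ for every prime $\mathfrak{p}$ of $\mathbb{K}$ above $p$, so that $p$ divides a value of $F$ and hence $n$ is confined to $O(1)$ residue classes modulo the index of apparition $\ell_p$ of $p$ in $F$. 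By the torsion-freeness hypothesis $\ell_p$ is large for all but a sparse set of primes $p$, and this is what converts a single large prime divisor of $G(n)$ into a saving of order $\log\log x / \log x$ in the count of $n \le x$, exactly as in the proof of Theorem~\ref{thm:main}.

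The crux is that the $\omega$ factors $G_1, \dots, G_\omega$ each force such a restriction \emph{independently}, and this is precisely where the hypothesis $\gcd(G, f_1, \dots, f_r) = 1$ enters. For each $j$, let $\mathcal{P}_j$ be the set of primes $p \notin \mathcal{B}$ admitting a residue $a$ with $p \mid G_j(a)$ but with $F(a)$ not divisible by some prime of $\mathbb{K}$ above $p$. The hypothesis $\gcd(G, f_1, \dots, f_r) = 1$ means that no irreducible factor of $G$ divides all of $f_1, \dots, f_r$, and from this one deduces, via a Chebotarev-type count on the reductions of $G_j$ and $F$ (using the torsion-freeness of the group generated by the roots), that each $\mathcal{P}_j$ has positive density; consequently, for all but a density-zero set of $n$, the value $G_j(n)$ has a prime divisor $p_j \in \mathcal{P}_j$ exceeding a slowly growing cutoff $z = z(x)$. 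Since the $G_j$ are pairwise coprime in $\mathbb{Z}[X]$ we have $\gcd\bigl(G_i(n), G_j(n)\bigr) \mid \operatorname{Res}(G_i, G_j)$, so these $p_1, \dots, p_\omega$ are automatically pairwise distinct once $z$ exceeds the relevant resultants; selecting them then imposes $\omega$ independent congruence conditions on $n$, to the pairwise coprime moduli $\ell_{p_1}, \dots, \ell_{p_\omega}$. Feeding this into the same Mertens and large-sieve estimate used for Theorem~\ref{thm:main}, but now iterated over the $\omega$ independent families $\mathcal{P}_1, \dots, \mathcal{P}_\omega$---each contributing its own factor of order $\log\log x / \log x$---yields $\#\mathcal{N}(x) \ll_{F,G} x\,(\log\log x/\log x)^{\omega}$, which is the assertion with $h = \omega$.

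The main obstacle will be the independence bookkeeping: one has to check, with implied constants uniform in $x$, that the $\omega$ families of congruence conditions arising from $\mathcal{P}_1, \dots, \mathcal{P}_\omega$ neither overlap nor degenerate when combined, so that the savings genuinely multiply into a power $(\log\log x / \log x)^{\omega}$ rather than collapsing to a single factor. This entails excising---effectively, so that the final implied constant stays explicit---the finitely many primes at which two of the $G_j(n)$ share a common factor, the rare $n$ for which some $G_j(n)$ has no prime divisor in $\mathcal{P}_j$ beyond the cutoff $z$, and the sparse primes $p$ with atypically small $\ell_p$; each contribution is negligible, but the error terms must be controlled uniformly. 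A lighter, closing point is to reconcile this $\omega$ with the general recipe for $h$ from the last section: when $G$ has only the root $1$ and no irreducible factor of $G$ divides all of $f_1, \dots, f_r$, that recipe reduces to counting exactly the distinct irreducible factors of the polynomial $G$, so that $h = \omega$ is not merely an admissible choice but the value the method produces---which is what ``can be taken as'' in Corollary~\ref{cor:integers} asserts.
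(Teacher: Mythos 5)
The corollary does not need a new sieve argument: it follows by simply identifying the exponent $h$ that the proof of Theorem~\ref{thm:main} already produces, which is the route the paper intends (and which you only gesture at in your final ``closing point''). Under the stated hypotheses, $G$ is already a nonconstant polynomial with $(G,f_1,\ldots,f_r)=1$, so the reduction steps of that proof alter nothing, and the sieve is run with $\Omega_p$ equal to the zero set modulo $p$ of $\widetilde{G}$, where $\widetilde{G}(n)=g\,N_{\mathbb{K}}(G(n))=g\,G(n)^{[\mathbb{K}:\mathbb{Q}]}$; the distinct irreducible factors of $\widetilde{G}$ in $\mathbb{Z}[X]$ are exactly those of $G$. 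Lemma~\ref{lem:zerosmodp} then gives $\sum_{p\le x}\#\Omega_p\log p/p=\omega\log x+O(1)$ with $\omega$ the number of distinct irreducible factors of $G$, and Lemma~\ref{lem:sieve} delivers the exponent $h=\omega$. That two-line observation is the whole proof.

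The main body of your proposal instead attempts to re-derive the bound via $\omega$ ``independent'' families of congruence conditions, and this route has genuine gaps. First, the mechanism ``$p\mid G_j(n)$ confines $n$ to $O(1)$ residue classes modulo the index of apparition $\ell_p$'' is a Lucas-sequence picture: for a recurrence with $r\ge 2$ roots the set of $n$ with $\pi\mid F(n)$ is not a union of boundedly many classes to a single modulus; the paper only bounds the \emph{proportion} of bad classes modulo $p(q-1)$ by $4p^{-1/2^r}$ via Lemma~\ref{lem:finitefield}. Second, the asserted pairwise coprimality of $\ell_{p_1},\ldots,\ell_{p_\omega}$ is unjustified and false in general, since multiplicative orders modulo distinct primes routinely share factors, so the $\omega$ conditions do not combine as claimed. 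Third, and decisively, the multiplication of the $\omega$ savings into $(\log\log x/\log x)^{\omega}$ is exactly the step you defer as ``the main obstacle'': in the paper no factor-by-factor independence is needed, because the large sieve applied to the single sifting system $(\Omega_p)_p$ automatically produces the exponent equal to the average of $\#\Omega_p$, which Kronecker--Chebotarev identifies as $\omega$. Likewise, the claims that each $\mathcal{P}_j$ has positive density and that the exceptional set of $n$ for which $G_j(n)$ lacks a large prime factor from $\mathcal{P}_j$ is as small as $x(\log\log x/\log x)^{\omega}$ are themselves the hard estimates, not available inputs. As written, the argument would not close.
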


Except for the term $\log \log x$, Corollary~\ref{cor:integers} should be optimal.
Indeed, pick a positive integer $h$ and an \emph{admissible} $h$-tuple $\mathbf{h} = (n_1, \ldots, n_h)$, that is, $n_1 < \cdots < n_h$ are positive integers such that for each prime number $p$ there exists a residue class modulo $p$ which does not intersect $\{n_1, \ldots, n_h\}$.
Assuming Hardy--Littlewood $h$-tuple conjecture~\cite[p.~61]{MR1555183}, we have that the number $T_{\mathbf{h}}(x)$ of positive integers $n \leq x$ such that $n + n_1, \ldots, n + n_h$ are all prime numbers satisfies
\begin{equation*}
T_{\mathbf{h}}(x) \sim C_{\mathbf{h}}\cdot \frac{x}{(\log x)^h} ,
\end{equation*}
as $x \to +\infty$, where $C_{\mathbf{h}} > 0$ depends on $\mathbf{h}$.
Therefore, taking $F(n) = (2^{n+n_1} - 2) \cdots (2^{n + n_h} - 2)$ and $G(n) = (n + n_1) \cdots (n + n_h)$, we obtain
\begin{equation*}
\#\mathcal{N}(x) \geq T_{\mathbf{h}}(x) \gg \frac{x}{(\log x)^h} ,
\end{equation*}
for all sufficiently large $x > 1$.

\subsection*{Notation}

Hereafter, the letter $p$ always denotes a prime number.
We employ the Landau--Bachmann ``Big Oh'' and ``little oh'' notations $O$ and $o$, as well as the associated Vinogradov symbols $\ll$ and $\gg$, with their usual meanings.
If $A \ll B$ and $A \gg B$, we write $A \asymp B$.
Any dependence of implied constants is explicitly stated or indicated with subscripts.

\section{Preliminaries}

First, we need a quantitative form of a result due to Kronecker~\cite{Kro80} (see also~\cite[p.~32]{MR1395088}), which states that the average number of zeros modulo $p$ of a nonconstant polynomial $f \in \mathbb{Z}[X]$ is equal to the number of irreducible factors of $f$ in $\mathbb{Z}[X]$.

\begin{lem}\label{lem:zerosmodp}
Given a nonconstant polynomial $f \in \mathbb{Z}[X]$, for each prime number $p$ let $\eta_f(p)$ be the number of zeros of $f$ modulo $p$.
Then
\begin{equation*}
\sum_{p \leq x} \eta_f(p) \cdot \frac{\log p}{p} = h \log x + O_f(1) ,
\end{equation*}
for all $x \geq 1$, where $h$ is the number of irreducible factors of $f$ in $\mathbb{Z}[X]$.
\end{lem}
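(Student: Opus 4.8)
The plan is to reduce the statement to the prime ideal counting function of the number fields attached to the irreducible factors of $f$, and then invoke the Chebotarev density theorem (or, since we only need the abelian-looking average over all primes, the prime ideal theorem) together with partial summation. First I would factor $f = c \cdot g_1^{e_1} \cdots g_h^{e_h}$ in $\mathbb{Z}[X]$ with $g_1, \ldots, g_h$ distinct irreducible primitive polynomials and $c \in \mathbb{Z}$. For all but finitely many primes $p$ (those not dividing $c$, the leading coefficients, or the relevant discriminants and resultants), the zeros of $f$ modulo $p$ are exactly the union of the zeros of the $g_i$ modulo $p$, these zero-sets are pairwise disjoint, and each $g_i$ is separable mod $p$; hence $\eta_f(p) = \sum_{i=1}^h \eta_{g_i}(p)$ with only an $O_f(1)$-bounded discrepancy absorbed into the error term after weighting by $(\log p)/p$ (note $\sum_p (\log p)/p$ over a finite set is $O_f(1)$, and for the finitely many bad primes $\eta_f(p) \le \deg f$). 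So it suffices to prove, for a single irreducible $g \in \mathbb{Z}[X]$ of degree $d$, that $\sum_{p \le x} \eta_g(p)\,(\log p)/p = \log x + O_g(1)$.

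Next I would identify $\eta_g(p)$, for $p$ unramified and not dividing the leading coefficient or the index, with the number of degree-one prime ideals of $\mathcal{O}_{\mathbb{K}_g}$ above $p$, where $\mathbb{K}_g = \mathbb{Q}[X]/(g)$; equivalently, by Dedekind's theorem, with the number of linear factors of $g$ modulo $p$. Therefore $\sum_{p \le x}\eta_g(p) (\log p)/p = \sum_{\mathfrak{p},\, N\mathfrak{p} \le x,\, f(\mathfrak{p})=1} (\log N\mathfrak{p})/N\mathfrak{p} + O_g(1)$, where the sum is over degree-one primes $\mathfrak{p}$ of $\mathbb{K}_g$. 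The contribution of prime ideals of residue degree $\ge 2$ to $\sum_{N\mathfrak{p}\le x}(\log N\mathfrak{p})/N\mathfrak{p}$ is $O_g(1)$ because there are $O(\sqrt{x})$ of them with norm at most $x$ (they have norm $p^f$ with $f \ge 2$) and each contributes $\ll (\log x)/x^{?}$ — more precisely the tail $\sum_{p}\sum_{f\ge 2}(\log p^f)/p^f$ converges. Hence the sum equals $\sum_{N\mathfrak{p} \le x}(\log N\mathfrak{p})/N\mathfrak{p} + O_g(1)$, summed now over \emph{all} primes of $\mathbb{K}_g$. By the prime ideal theorem (or just Mertens' theorem for number fields, which follows from the analytic continuation and nonvanishing on $\mathrm{Re}\, s = 1$ of $\zeta_{\mathbb{K}_g}$), this last sum is $\log x + O_g(1)$; summing the $h$ contributions gives $h\log x + O_f(1)$.

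The main obstacle — really the only nontrivial input — is the number-field Mertens estimate $\sum_{N\mathfrak{p} \le x}(\log N\mathfrak{p})/N\mathfrak{p} = \log x + O_{\mathbb{K}}(1)$. One clean route is: $\sum_{N\mathfrak{p}\le x}(\log N\mathfrak{p})/N\mathfrak{p} = \sum_{n \le x} \Lambda_{\mathbb{K}}(n)/n + O_{\mathbb{K}}(1)$ where $\Lambda_{\mathbb{K}}$ is the von Mangoldt function of $\mathbb{K}$ (the prime-power terms again contributing $O_{\mathbb{K}}(1)$), and then $\sum_{n\le x}\Lambda_{\mathbb{K}}(n)/n = \log x + O_{\mathbb{K}}(1)$ follows from $\psi_{\mathbb{K}}(x) = \sum_{n\le x}\Lambda_{\mathbb{K}}(n) = x + o(x)$ by partial summation exactly as in the rational case — or, avoiding the prime ideal theorem altogether, directly from $-\zeta_{\mathbb{K}}'/\zeta_{\mathbb{K}}(s)$ having a simple pole of residue $1$ at $s=1$, giving $\sum_{n}\Lambda_{\mathbb{K}}(n) n^{-s} \sim (s-1)^{-1}$ as $s \to 1^+$, which by a Tauberian/elementary argument yields the averaged form we need. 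I would present the short self-contained deduction from the pole of $\zeta_{\mathbb{K}}$ since it keeps the lemma elementary and makes the $O_f(1)$ genuinely effective in terms of $f$.
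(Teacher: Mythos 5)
Your argument is correct in outline but takes a genuinely different route from the paper. The paper also reduces to an irreducible factor $g$, but then invokes an effective Chebotarev density theorem together with Burnside's lemma: since the Galois group of the splitting field acts transitively on the roots of $g$, the average number of fixed points over the group is $1$, whence $\sum_{p \leq x} \eta_g(p) = \Li(x) + O\bigl(x/\exp(C\sqrt{\log x})\bigr)$, and partial summation finishes. You instead use Dedekind's theorem to identify $\eta_g(p)$ (away from finitely many bad primes) with the number of degree-one prime ideals of $\mathbb{Q}[X]/(g)$ above $p$, and then appeal to a Mertens-type estimate for the Dedekind zeta function. Your route is the more classical one (it is essentially Kronecker's original argument) and has the advantage of avoiding Chebotarev entirely, needing only the simple pole of $\zeta_{\mathbb{K}}$ at $s=1$; the paper's route gives the stronger unweighted asymptotic $\sum_{p\le x}\eta_f(p)\sim h\Li(x)$ as a byproduct, which is more than the lemma needs.

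One step of your justification of the key input needs repair. Partial summation applied to $\psi_{\mathbb{K}}(x) = x + o(x)$ yields only $\sum_{n \leq x} \Lambda_{\mathbb{K}}(n)/n = \log x + o(\log x)$, not the $O_{\mathbb{K}}(1)$ error term that the lemma (and its application in the sieve) actually requires; likewise, a bare Tauberian deduction from the pole of $-\zeta_{\mathbb{K}}'/\zeta_{\mathbb{K}}$ at $s=1$ gives an asymptotic, not a bounded error. The estimate $\sum_{N\mathfrak{p} \leq x} (\log N\mathfrak{p})/N\mathfrak{p} = \log x + O_{\mathbb{K}}(1)$ is nonetheless a standard theorem; the clean way to get it is the elementary Mertens-style argument using Weber's ideal-counting theorem $\#\{\mathfrak{a} : N\mathfrak{a} \leq x\} = \rho_{\mathbb{K}} x + O\bigl(x^{1 - 1/d}\bigr)$ (as in Rosen's generalization of Mertens' theorem to number fields), or alternatively the error term $O\bigl(x/\exp(C\sqrt{\log x})\bigr)$ in Landau's prime ideal theorem, after which partial summation does give $O_{\mathbb{K}}(1)$. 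With that reference or argument substituted in, your proof is complete. The rest — the reduction to irreducible factors with the finitely many bad primes (including those dividing the content, where $\eta_f(p)=p$ but still only contributes $\log p$) absorbed into $O_f(1)$, and the convergent contribution of residue degree $\geq 2$ primes — is fine.
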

\begin{proof}
It is enough to prove the claim for irreducible $f$.
Let $\mathbb{L}$ be the splitting field of $f$ over $\mathbb{Q}$ and let $\mathcal{G} := \operatorname{Gal}(\mathbb{L} / \mathbb{Q})$.
For any coniugacy class $C$ of $\mathcal{G}$, let $\pi_C(x)$ be the number of primes $p \leq x$ which do not ramified in $\mathbb{L}$ and such that their Frobenius substitutions $\sigma_p$ belong to $C$. 
A quantitative version of the Chebotarev's density theorem~\cite[Theorem~3.4]{MR2920749} states that
\begin{equation*}
\pi_C(x) = \frac{\#C}{\#\mathcal{G}} \cdot \Li(x) + O_\mathbb{L}\!\left(\frac{x}{\exp(C \sqrt{\log x})}\right) ,
\end{equation*}
for $x \to +\infty$, where $\Li(x)$ is the logarithmic integral function and $C > 0$ is a constant depending on $\mathbb{L}$.
If the elements of $C$ have cycle pattern $d_1, \ldots, d_s$, when regarded as permutations of the roots of $f$, then $\pi_C(x)$ is the number of primes $p \leq x$ not dividing the discriminant of $f$ and such that the irreducible factors of $f$ modulo $p$ have degrees $d_1,\ldots,d_s$.

Furthermore, $\mathcal{G}$ acts transitively on the roots of $f$, since $f$ is irreducible, hence
\begin{equation*}
\sum_{g \in \mathcal{G}} \#X^g = \#\mathcal{G} ,
\end{equation*}
by Burnside's lemma, where $X^g$ is the set of roots of $f$ which are fixed by $g$.
Therefore,
\begin{equation*}
\sum_{p \leq x} \eta_f(p) = \Li(x) + O_\mathbb{L}\!\left(\frac{x}{\exp(C \sqrt{\log x})}\right) ,
\end{equation*}
and the desired result follows by partial summation.
\end{proof}

The following lemma~\cite[Lemma~A.2]{MR1918678} regards the minimum of the multiplicative orders of some fixed algebraic numbers modulo a prime ideal.

\begin{lem}\label{lem:ponequarter}
Let $\beta_1, \ldots, \beta_s \in \mathbb{K}$ such that none of them is zero or a root of unity.
Then, for all $x \geq 1$, the number of prime numbers $p \leq x$ such that some $\beta_i$ has order less than $p^{1/4}$ modulo some prime ideal of $\mathcal{O}_\mathbb{K}$ lying above $p$ is $O(x^{1/2})$, where the implied constant depends only on $\beta_1, \ldots, \beta_s$.
\end{lem}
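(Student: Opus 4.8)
The plan is to reduce to a single $\beta$ and then exploit the fact that a prime with small $\beta$-order must divide $\beta^{e}-1$ for a small exponent $e$, a number that has very few large prime factors.

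Concretely, since the set to be bounded is the union, over the finitely many indices $i$, of the corresponding sets for $\beta = \beta_i$, it suffices to prove
$\#\{p \le x : \exists\, \mathfrak p \mid p \text{ with } \operatorname{ord}_{\mathfrak p}(\beta) < p^{1/4}\} \ll_\beta x^{1/2}$
for a single $\beta \in \mathbb{K}$ that is neither $0$ nor a root of unity. For such a prime $p$ I would pick a witness $\mathfrak p \mid p$ and set $e := \operatorname{ord}_{\mathfrak p}(\beta) < p^{1/4}$; then $\beta$ is a $\mathfrak p$-unit and $\beta^{e} \equiv 1 \pmod{\mathfrak p}$, so $\mathfrak p$ divides the numerator ideal of $\beta^{e} - 1$. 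Here $\beta^{e} - 1 \neq 0$ precisely because $\beta$ is not a root of unity. Hence $p$ divides the nonzero rational integer $N_e := N_{\mathbb{K}/\mathbb{Q}}\big((\beta^{e}-1)_{\mathrm{num}}\big)$, and the trivial estimate on the archimedean absolute values of the conjugates of $\beta^{e}-1$, together with control of the bounded denominator of $\beta$, gives $|N_e| \le E^{\,e}$ for some constant $E = E(\beta) > 1$.

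The next step is to fix $e$ and count: every $p$ with witness-exponent $e$ satisfies $p > e^{4}$ and $p \mid N_e$, so there are at most $\log|N_e| / \log(e^{4}) \le e \log E / (4\log e) \ll_\beta e/\log e$ of them (for $e = 1$ this is just the number of prime factors of $N_1$, which is $O_\beta(1)$). Since a prime $p \le x$ of the desired kind has a witness-exponent $e$ with $1 \le e < p^{1/4} \le x^{1/4}$, summing the preceding bound over $e \le x^{1/4}$ yields a total of $\ll_\beta \sum_{e \le x^{1/4}} e/\log e \ll_\beta x^{1/2}/\log x$ such primes. Summing over $\beta_1,\dots,\beta_s$ gives the stated $O(x^{1/2})$.

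I do not expect a serious obstacle here: the argument is elementary once one has the size bound $|N_e| \le E^{\,e}$ and the observation that an integer of this size has $O(e/\log e)$ prime factors exceeding $e^{4}$. The points requiring care are the $e=1$ term, the finitely many primes at which $\beta$ fails to be a local unit (which simply do not meet the hypothesis and may be discarded), and—above all—the necessity of the assumption that no $\beta_i$ is a root of unity, which is exactly what guarantees that $N_e$ is nonzero so that the divisor count is meaningful. One could replace $p^{1/4}$ by $p^{\delta}$ for any fixed $\delta < 1/2$ at the cost of a weaker $o(x)$ bound; the value $1/4$ is chosen so that $\sum_{e \le x^{\delta}} e/\log e$ comes out as $x^{1/2}/\log x$.
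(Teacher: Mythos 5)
Your argument is correct, and it is essentially the standard proof of this fact: the paper itself gives no proof but cites it as Lemma~A.2 of Corvaja--Zannier, where the same idea is used --- a prime $\mathfrak p$ witnessing order $e<p^{1/4}$ must divide the numerator ideal of $\beta^e-1$, whose norm is $\leq E^e$, and summing the resulting divisor counts over $e\leq x^{1/4}$ gives $O(x^{1/2})$. Your extra observation that $p>e^4$ even yields the slightly sharper $O(x^{1/2}/\log x)$; all the delicate points (non-vanishing of $\beta^e-1$, the finitely many primes where $\beta$ is not a local unit, the $e=1$ term) are handled correctly.
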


{
Given a multiplicative function $g$, let $\Lambda_g$ be its \emph{associated von Mangoldt function}, that is, the unique arithmetic function satisfying
\begin{equation*}
\sum_{d \,\mid\, n} g(n / d) \Lambda_g(d) = g(n) \log n ,
\end{equation*}
for all positive integers $n$ (see \cite[p.~17]{MR2061214}).
It is easy to prove that $\Lambda_g$ is supported on prime powers.

\begin{thm}\label{thm:wir}
Let $g$ be a multiplicative arithmetic function such that
\begin{equation}\label{equ:wir1}
\sum_{n \leq x} \Lambda_g(n) = h \log x + O(L)
\end{equation}
and
\begin{equation}\label{equ:wir2}
\sum_{n \leq x} |g(n)| \ll (\log x)^h ,
\end{equation}
for all $x \geq 2$, where $h, L > 0$ are some constants.
Then
\begin{equation*}
\sum_{n \leq x} g(n) = (\log x)^h \cdot \left(c_g + O_h\!\left(\frac{L}{\log x}\right)\right) ,
\end{equation*}
where
\begin{equation*}
c_g := \frac1{\Gamma(h + 1)} \prod_p \left(1 + g(p) + g(p^2) + \cdots \right)\left(1 - \frac1{p}\right)^h
\end{equation*}
and $\Gamma$ is the Euler's Gamma function.
\end{thm}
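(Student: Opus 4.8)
The plan is to view Theorem~\ref{thm:wir} as a mean-value theorem of Selberg--Delange type, and to prove it by first converting the hypothesis~(\ref{equ:wir1}) on $\Lambda_g$ into an asymptotic for the partial sums $M(x) := \sum_{n \le x} g(n)$ via the convolution identity defining $\Lambda_g$, and then identifying the resulting constant through a comparison of Dirichlet series. For the first step I would sum the relation $g(n)\log n = \sum_{d \mid n} g(n/d)\,\Lambda_g(d)$ over $n \le x$ and exchange the order of summation to obtain $\sum_{n \le x} g(n)\log n = \sum_{d \le x}\Lambda_g(d)\,M(x/d)$. Partial summation turns the left-hand side into $M(x)\log x - \int_1^x M(t)\,t^{-1}\,dt$. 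For the right-hand side I would write $\Psi(t) := \sum_{d \le t}\Lambda_g(d) = h\log t + E(t)$ with $E(t) = O(L)$ by~(\ref{equ:wir1}), integrate $M(x/t)$ against $d\Psi(t)$ in the Riemann--Stieltjes sense, substitute $u = x/t$ in the resulting main term, and integrate by parts in the remainder, bounding the latter by $\|E\|_\infty \sum_{n \le x}|g(n)| \ll L(\log x)^h$ via~(\ref{equ:wir2}). This yields
\begin{equation*}
M(x)\log x = (h+1)\int_1^x \frac{M(t)}{t}\,dt + O\!\left(L(\log x)^h\right), \qquad x \ge 2 .
\end{equation*}

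The second step is to solve this relation. Writing $v := \log x$ and $\widetilde N(v) := \int_1^{e^v} M(t)\,t^{-1}\,dt = \int_0^v M(e^u)\,du$, the identity becomes $v\,\widetilde N'(v) = (h+1)\widetilde N(v) + O(Lv^h)$, so that $\phi(v) := v^{-(h+1)}\widetilde N(v)$ satisfies $\phi'(v) = O(Lv^{-2})$. Since $v^{-2}$ is integrable at infinity, $\phi(v)$ converges to a finite constant $B$, with $\phi(v) = B + O(L/v)$; this requires no lower bound on $|M|$ and so in particular handles the case $B = 0$. Hence $\int_1^x M(t)\,t^{-1}\,dt = B(\log x)^{h+1} + O(L(\log x)^h)$, and feeding this back into the relation above gives $M(x) = (h+1)B(\log x)^h + O(L(\log x)^{h-1})$, which is $(\log x)^h\big((h+1)B + O(L/\log x)\big)$. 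It then only remains to prove $(h+1)B = c_g$.

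For this I would bring in the Dirichlet series $D(s) := \sum_{n} g(n)n^{-s}$, which by~(\ref{equ:wir2}) converges absolutely for $\Re s > 0$ and there equals $\prod_p\big(1 + g(p)p^{-s} + g(p^2)p^{-2s} + \cdots\big)$. On the one hand, partial summation together with the asymptotic just obtained for $M$, and the evaluation $\int_1^\infty(\log x)^h x^{-s-1}\,dx = \Gamma(h+1)\,s^{-h-1}$, give $s^h D(s) = (h+1)B\,\Gamma(h+1) + O(Ls)$ as $s \to 0^+$. On the other hand, writing $D(s) = \zeta(1+s)^h\prod_p\big(1 + g(p)p^{-s} + \cdots\big)\big(1 - p^{-1-s}\big)^h$ and using $s\,\zeta(1+s) \to 1$, the same limit equals $\prod_p\big(1 + g(p) + g(p^2) + \cdots\big)\big(1 - 1/p\big)^h$, provided one knows that this product converges and that the limit $s \to 0^+$ can be taken factor by factor. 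Comparing the two evaluations yields $(h+1)B\,\Gamma(h+1) = \prod_p\big(1 + g(p) + g(p^2) + \cdots\big)\big(1 - 1/p\big)^h$, that is, $(h+1)B = c_g$.

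I expect this last point to be the main obstacle: one must control the Euler factors $1 + g(p)p^{-s} + g(p^2)p^{-2s} + \cdots$ uniformly as $\Re s \to 0$, and in particular show that $\sum_p\big(g(p) + g(p^2) + \cdots\big)$ converges so that the product at $s = 0$ is meaningful. Here the two hypotheses must be combined. Differencing~(\ref{equ:wir1}) bounds each $\Lambda_g(n)$, which through $k\,g(p^k)\log p = \sum_{j=1}^{k} g(p^{k-j})\,\Lambda_g(p^j)$ gives quantitative bounds on the $g(p^k)$; subtracting Mertens' estimate $\sum_{p \le x}(\log p)/p = \log x + O(1)$ from~(\ref{equ:wir1}) produces the cancellation $\sum_{p \le x}\big(g(p)\log p - h(\log p)/p\big) + \sum_{p^k \le x,\,k \ge 2}\Lambda_g(p^k) = O(L)$; and~(\ref{equ:wir2}) yields $\sum_p |g(p)|/p < \infty$. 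Assembling these so that the passage to the limit in the Euler product is legitimate, with all error terms uniform, is the delicate part. (An alternative that relocates rather than removes this difficulty is to run the Selberg--Delange method directly: (\ref{equ:wir1}) gives $-D'/D(s) = h/s + O(L)$ on $(0,1]$, hence a description of $s^h D(s)$ up to the line $\Re s = 0$, and a quantitative Perron-type contour integral then delivers the asymptotic with error $O_h(L/\log x)$ and the constant in Euler-product form; the behaviour of $D(s)$ as $\Re s \to 0$ is again where the work lies.)
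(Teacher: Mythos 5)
Your argument is essentially the paper's: the paper proves this theorem simply by citing the proof of Theorem~1.1 in Iwaniec--Kowalski and observing that one need only carry the error term $O(L)$ in place of $O(1)$, and that proof is exactly the one you reconstruct (convolution identity, the integral-delay equation $M(x)\log x=(h+1)\int_1^x M(t)\,t^{-1}\,dt+O(L(\log x)^h)$, its solution via $\phi(v)=v^{-(h+1)}\widetilde N(v)$, and identification of the constant by comparing $s^hD(s)$ with the Euler product as $s\to 0^+$). The passage to the limit in the Euler product that you flag as the main obstacle is likewise the delicate point of the cited proof, and is handled there with precisely the ingredients you list.
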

\begin{proof}
The proof proceeds exactly as the proof of \cite[Theorem~1.1]{MR2061214}, but using the error term $O(L)$ instead of $O(1)$.
\end{proof}

Now we state a technical lemma about the cardinality of a sieved set of integers.

\begin{lem}\label{lem:sieve}
For each prime number $p$, let $\Omega_p \subsetneq \{0, 1, \ldots, p - 1\}$ be a set of residues modulo $p$.
Suppose that there exist constants $c_1, c_2, h > 0$ such that $\#\Omega_p \leq c_1$ for each prime number $p$ and
\begin{equation}\label{equ:Omegaplogp}
\sum_{p \leq x} \#\Omega_p \cdot \frac{\log p}{p} = h \log x + O(c_2) ,
\end{equation}
for all $x > 1$.
Then we have
\begin{equation*}
\#\left\{n \leq x : (n \bmod p) \notin \Omega_p, \; \forall p \in {]y,z]}\right\} \ll_{c_1, c_2, h, \delta_1, \delta_2} x \cdot \left(\frac{\log y}{\log x}\right)^h ,
\end{equation*}
for all $\delta_1, \delta_2 > 0$, $x > 1$, $2 \leq y \leq (\log x)^{\delta_1}$, and $z \geq x^{\delta_2}$.
\end{lem}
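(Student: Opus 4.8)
The plan is to prove Lemma~\ref{lem:sieve} by a sieve argument: the quantity to be bounded is the number of $n\in[1,x]$ that survive the sieve which removes, for each prime $p$, the residue classes in $\Omega_p$, and hypothesis~(\ref{equ:Omegaplogp}) says exactly that this is a sieve of dimension $h$ with good density regularity. A few harmless reductions come first. For $x$ in a bounded range $3\le x\le x_0$, with $x_0=x_0(c_1,h,\delta_1,\delta_2)$ chosen below, the bound is trivial, since then $(\log y/\log x)^h$ is bounded below and the count is at most $x_0$; so assume $x$ large. Applying~(\ref{equ:Omegaplogp}) at $x=c_1$ together with $\#\Omega_p\le c_1$ gives $h\le c_1$, so the sieve dimension is bounded in terms of $c_1$. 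Since imposing the congruence restrictions for fewer primes can only enlarge the counted set, and $x^{\min(\theta_0,\delta_2)}\le x^{\delta_2}\le z$ for a small constant $\theta_0=\theta_0(c_1)>0$ fixed below, it suffices to bound $S:=\#\{n\le x:(n\bmod p)\notin\Omega_p \text{ for all } p\in\,]y',w]\}$, where $y':=\max(y,c_1)$ and $w:=x^{\min(\theta_0,\delta_2)}$. Here $y'<w$ for $x$ large, $\log y'\asymp_{c_1}\log y$ (as $y\ge 2$), and $\log w\asymp_{\delta_2}\log x$, so it is enough to show $S\ll x(\log y'/\log w)^h$.

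Next I would invoke the fundamental lemma of sieve theory (in the form found, for instance, in the monographs of Halberstam and Richert or of Friedlander and Iwaniec), applied to the sieve of the integers in $[1,x]$ that removes $\#\Omega_p\le c_1$ residue classes modulo each prime $p\in\,]y',w]$. By~(\ref{equ:Omegaplogp}) this sieve has dimension $h$ with density constant $O(c_2)$; taking the level of distribution to be a fixed small power $x^{c'}$ of $x$---this is precisely what fixes $\theta_0$, so that $w$ lies within the admissible sifting range---the fundamental lemma yields
\begin{equation*}
S \;\ll_{c_1,c_2,h,\delta_2}\; x\prod_{y'<p\le w}\!\left(1-\frac{\#\Omega_p}{p}\right) \;+\; x^{1-c}
\end{equation*}
for some constant $c=c(c_1)>0$. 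The product is evaluated by partial summation from~(\ref{equ:Omegaplogp}): one gets $\sum_{y'<p\le w}\#\Omega_p/p = h\log(\log w/\log y') + O(c_2/\log y')$, whence, using $\sum_p(\#\Omega_p/p)^2=O_{c_1}(1)$ and the fact that $\#\Omega_p/p$ is bounded away from $1$ for $p>c_1$,
\begin{equation*}
\prod_{y'<p\le w}\!\left(1-\frac{\#\Omega_p}{p}\right) \;\asymp_{c_1,c_2,h}\; \left(\frac{\log y'}{\log w}\right)^{\!h} \;\asymp_{c_1,h,\delta_2}\; \left(\frac{\log y}{\log x}\right)^{\!h}.
\end{equation*}
Since $\log y'\ge\log 2$, the main term is $\gg (\log x)^{-h}x\gg x^{1-c}$ for $x$ large, so the remainder $x^{1-c}$ is negligible against it; collecting the estimates, and treating $3\le x\le x_0$ by the trivial bound $S\le x$, completes the proof with the claimed dependence of the implied constant.

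The heart of the matter is the invocation of the fundamental lemma and the verification that our hypotheses supply exactly what it needs: dimension $h$, a uniformly bounded number $\le c_1$ of removed classes per prime, and---crucially---the right to take the level of distribution to be a fixed positive power of $x$. This last point is what makes the bound come out: an elementary device such as Brun's pure sieve only permits sifting up to $w=(\log x)^{O(1)}$, for which $\prod_{y<p\le w}(1-\#\Omega_p/p)$ is merely bounded below by a positive constant and the saving $(\log y/\log x)^h$ is lost entirely; it is precisely the stronger weighted (Brun or Selberg) sieve, which allows $w$ to be a power of $x$, that produces the exponent $h$. The remaining ingredients---the partial summation, the inequality $h\le c_1$, and the reductions dealing with small primes and with bounded $x$---are routine.
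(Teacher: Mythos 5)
Your proof is correct, but it follows a genuinely different route from the paper. You invoke the fundamental lemma of sieve theory (a beta/Brun--Selberg sieve of dimension $h \le c_1$ with level of distribution a fixed power of $x$), obtaining $S \ll x\prod_{y'<p\le w}(1-\#\Omega_p/p) + x^{1-c}$ and then evaluating the product by a Mertens-type partial summation from \eqref{equ:Omegaplogp}; the verification that the remainder $\sum_{d\le D}|r_d| \ll \sum_{d \le D} c_1^{\omega(d)}$ is a power of $x$ smaller than the main term, and that the one-sided dimension condition required by the fundamental lemma follows from \eqref{equ:Omegaplogp}, are routine as you say (your parenthetical ``applying \eqref{equ:Omegaplogp} at $x=c_1$'' to get $h\le c_1$ should really be ``letting $x\to\infty$ and comparing with Mertens,'' but the inequality is true and trivial). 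The paper instead uses the arithmetic large sieve inequality, which has no remainder term at all but reduces the problem to a \emph{lower} bound for $\sum_{m\le w}g_y(m)$ with $g_y(p)=\#\Omega_p/(p-\#\Omega_p)$; that lower bound is then extracted from a Wirsing-type mean value theorem (Theorem~\ref{thm:wir}) in which the error term $O(L)$ with $L=\log y$ must be tracked explicitly. So the trade-off is: your approach imports a heavier off-the-shelf sieve but finishes with an elementary product computation, while the paper's approach uses a remainder-free inequality at the cost of a bespoke mean-value estimate for the multiplicative function $g_y$. Both yield the same bound with the same dependence of the implied constant on $c_1, c_2, h, \delta_1, \delta_2$, and your observation that the level of distribution must be a power of $x$ (ruling out Brun's pure sieve) is exactly mirrored in the paper's choice $w=x^{\delta_2}$ in the large sieve.
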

\begin{proof}
All the constants in this proof, included the implied ones, may depend on $c_1$, $c_2$, $h$, $\delta_1$, $\delta_2$.
Clearly, we can assume $\delta_2 \leq 1/2$.
By the large sieve inequality~\cite[Theorem~7.14]{MR2061214}, we have
\begin{equation}\label{equ:sieve1}
\#\left\{n \leq x : (n \bmod p) \notin \Omega_p, \; \forall p \in {]y,z]}\right\} \ll x \cdot \left(\sum_{m \leq w} g_y(m) \right)^{-1} ,
\end{equation}
where $w := x^{\delta_2}$ and $g_y$ is the multiplicative arithmetic function supported on squarefree numbers with all prime factors $> y$ and such that
\begin{equation*}
g_y(p) = \frac{\#\Omega_p}{p - \#\Omega_p} ,
\end{equation*}
for any prime number $p > y$.

For sufficiently large $x$, we have $y \leq w$, and it follows from (\ref{equ:Omegaplogp}) and $\#\Omega_p \leq c_1$ that
\begin{equation*}
\sum_{p \leq w} g_y(p) \log p = h \log w + O(\log y) ,
\end{equation*}
which in turn implies that
\begin{equation*}
\sum_{n \leq w} \Lambda_{g_y}(n) = h \log w + O(\log y) ,
\end{equation*}
since $\Lambda_{g_y}$ is supported on prime powers $p^s$, with $p > y$, and $\Lambda_{g_y}(p^s) = -(-g_y(p))^s \log p$.

Furthermore, again from (\ref{equ:Omegaplogp}) and $\#\Omega_p \leq c_1$, we have
\begin{equation}\label{equ:1minusOmega}
\prod_{p \leq t} \left(1 - \frac{\#\Omega_p}{p}\right)^{-1} \asymp (\log t)^h ,
\end{equation}
for all $t \geq 2$, so that
\begin{equation*}
\sum_{n \leq w} |g_y(n)| \leq \prod_{p \leq w} (1 + g_y(p)) \leq \prod_{p \leq w} \left(1 - \frac{\#\Omega_p}{p}\right)^{-1} \ll (\log w)^h .
\end{equation*}

At this point, we have proved that (\ref{equ:wir1}) and (\ref{equ:wir2}) hold with $g = g_y$ and $L = \log y$.
Therefore, by Theorem~\ref{thm:wir} we have
\begin{equation}\label{equ:s1}
\sum_{n \leq w} g(n) = (\log w)^h \cdot \left(c_{g_y} + O\!\left(\frac{\log y}{\log w}\right)\right) ,
\end{equation}
where
\begin{equation*}
c_{g_y} = \frac1{\Gamma(h + 1)} \prod_p (1 + g_y(p))\left(1 - \frac1{p}\right)^h .
\end{equation*}
Now using (\ref{equ:1minusOmega}) we obtain
\begin{equation}\label{equ:cgy}
c_{g_y} = \frac1{\Gamma(h + 1)} \prod_p \left(1 - \frac{\#\Omega_p}{p}\right)^{-1} \left(1 - \frac1{p}\right)^h \prod_{p \leq y} \left(1 - \frac{\#\Omega_p}{p}\right) \gg \frac1{(\log y)^h} .
\end{equation}
Hence, recalling that $y \leq (\log x)^{\delta_1}$ and $w = x^{\delta_2}$, by (\ref{equ:s1}) and (\ref{equ:cgy}) we find that
\begin{equation}\label{equ:sieve2}
\sum_{n \leq w} g(n) \gg \left(\frac{\log w}{\log x}\right)^h \gg \left(\frac{\log x}{\log y}\right)^h.
\end{equation}
Putting together (\ref{equ:sieve1}) and (\ref{equ:sieve2}), the desired result follows.
\end{proof}
}
Finally, we need a lemma about the number of zeros of a simple linear recurrence in a finite field of $q$ elements $\mathbb{F}_q$.

\begin{lem}\label{lem:finitefield}
Let $c_1, \ldots, c_r, a_1, \ldots, a_r \in \mathbb{F}_q^*$, and let $N$ be the minimum of the orders of the $a_i / a_j$ ($i \neq j$) in $\mathbb{F}_q^*$.
(If $r = 1$ then pick an arbitrary positive integer $N$.)
Then the number of integers $m \in [0, q - 2]$ such that
\begin{equation*}
\sum_{i=1}^r c_i a_i^m = 0
\end{equation*}
is at most $4(q - 1)N^{-1/2^{r - 2}}$.
\end{lem}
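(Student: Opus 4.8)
The plan is to argue by induction on $r$, combining an autocorrelation count with an elimination trick: from two zeros $m$ and $m+t$ of the $r$-term relation one produces a zero of an $(r-1)$-term relation, which is then controlled by the inductive hypothesis.

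First I would dispose of the trivial cases. Since $a_i^{q-1}=1$ in $\mathbb{F}_q^*$, whether $\sum_{i=1}^{r} c_i a_i^m=0$ holds depends only on $m\bmod(q-1)$, so I identify the set of solutions in $[0,q-2]$ with a subset $M\subseteq\mathbb{Z}/(q-1)\mathbb{Z}$; in particular $\#M\le q-1$. Whenever $N\le 4^{2^{r-2}}$ one has $4(q-1)N^{-1/2^{r-2}}\ge q-1\ge\#M$, so the bound holds trivially; hence we may assume $N>4^{2^{r-2}}\ge 2$, which forces $a_1,\dots,a_r$ to be pairwise distinct. For $r=1$ the set $M$ is empty. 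For $r=2$ the equation reads $(a_1/a_2)^m=-c_2/c_1$, which can hold on at most one residue class modulo $d:=\operatorname{ord}(a_1/a_2)$, hence for at most $(q-1)/d\le(q-1)/N\le 4(q-1)/N$ residues modulo $q-1$; this settles the base case.

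For the inductive step take $r\ge 3$ and fix $t\in\mathbb{Z}/(q-1)\mathbb{Z}$. If $m\in M$ and $m+t\in M$, then multiplying $\sum_{i=1}^{r} c_i a_i^m=0$ by $a_r^{\,t}$ and subtracting it from $\sum_{i=1}^{r} c_i a_i^{m+t}=0$ kills the term $i=r$ and yields $\sum_{i=1}^{r-1}c_i(a_i^{\,t}-a_r^{\,t})\,a_i^m=0$. Call $t$ \emph{good} if $a_i^{\,t}\ne a_r^{\,t}$, i.e.\ $\operatorname{ord}(a_i/a_r)\nmid t$, for every $i<r$; for such $t$ the coefficients $c_i(a_i^{\,t}-a_r^{\,t})$ all lie in $\mathbb{F}_q^*$, and the ratios $a_i/a_j$ with $i,j<r$ have orders among the original ones, hence $\ge N$. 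So the inductive hypothesis bounds the number of admissible $m$ by $4(q-1)N^{-1/2^{r-3}}$, which gives $\#(M\cap(M-t))\le 4(q-1)N^{-1/2^{r-3}}$ for every good $t$, whereas for the remaining $t$ one just uses $\#(M\cap(M-t))\le\#M$.

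To conclude, I would sum over $t$. From the identity $\sum_{t\in\mathbb{Z}/(q-1)\mathbb{Z}}\#(M\cap(M-t))=(\#M)^2$, together with the bound $(r-1)(q-1)/N$ on the number of non-good $t$ (a union bound over the $r-1$ divisibility conditions $\operatorname{ord}(a_i/a_r)\mid t$), one obtains, writing $B:=\#M$ and $Q:=q-1$,
\begin{equation*}
B^2\le\frac{(r-1)Q}{N}\,B+4Q^2N^{-1/2^{r-3}},\qquad\text{hence}\qquad B\le\frac{(r-1)Q}{N}+2Q\,N^{-1/2^{r-2}}.
\end{equation*}
Since $N>4^{2^{r-2}}$ yields $N^{1-1/2^{r-2}}>4^{2^{r-2}-1}\ge(r-1)/2$, the first summand is at most $2Q\,N^{-1/2^{r-2}}$, so $B\le 4Q\,N^{-1/2^{r-2}}$, as wanted. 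The step I expect to be most delicate is exactly this balancing of the ``bad $t$'' term against the main term: it is what forces the preliminary reduction to $N$ large, and it is responsible for both the square-root-per-step exponent $1/2^{r-2}$ and the constant $4$. One must also keep track of the fact that, in the surviving regime, all the $a_i$ are distinct and each reduced relation has nonzero coefficients, so that the lemma can legitimately be applied to it.
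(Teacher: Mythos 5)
Your proof is correct. The paper itself does not prove this lemma---it only cites Proposition~A.1 of Corvaja and Zannier and remarks that the argument for prime $q$ carries over to general $q$---and your induction (eliminating the $r$-th term via a shift $t$, bounding the bad $t$ by a union bound over the conditions $\operatorname{ord}(a_i/a_r)\mid t$, and closing with the autocorrelation identity $\sum_t \#(M\cap(M-t))=(\#M)^2$ together with the reduction to $N>4^{2^{r-2}}$) is essentially a correct, self-contained reconstruction of that cited argument, accounting for both the halving exponent $1/2^{r-2}$ and the constant $4$.
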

\begin{proof}
In~\cite[Proposition~A.1]{MR1918678} the claim is stated and proved for prime $q$, but the same proof works also for not necessarily prime $q$.
\end{proof}

Note that results stronger than Lemma~\ref{lem:finitefield} can be obtained using bounds for the number of zeros of sparse polynomials in finite fields (see, e.g., \cite{MR1815369, MR3540955}).

\section{Proof of Theorem~\ref{thm:main}}

The first part of the proof proceeds similarly to the proof of Theorem~\ref{thm:zerodensity}.
If $\mathcal{N}$ is finite, then the claim is trivial, hence we suppose that $\mathcal{N}$ is infinite.
Then, by Theorem~\ref{thm:infinite} it follows that $F / G = H / P$, for some linear recurrence $H$ and some polynomial $P$.
As a consequence, without loss of generality, we shall assume that $G$ is a polynomial.

Let $S$ be a finite set of absolute values of $\mathbb{K}$ containing all the archimedean ones.
Write $\mathcal{O}_S$ for the ring of $S$-integers of $\mathbb{K}$, that is, the set of all $\alpha \in \mathbb{K}$ such that $|\alpha|_v \leq 1$ for all $v \notin S$.
Enlarging $\mathbb{K}$ and $S$ we may assume that $\alpha_1, \ldots, \alpha_r$ are $S$-units, $f_1, \ldots, f_r, G \in \mathcal{O}_S[X]$, and $\mathfrak{R} \subseteq \mathcal{O}_S$.

Since $F / G$ is not a linear recurrence, it follows that $G$ does not divide all the $f_1, \ldots, f_r$.
Moreover, factoring out the greatest common divisor $(G, f_1, \ldots, f_r)$ we can even assume that $(G, f_1 , \ldots, f_r) = 1$ and that $G$ is nonconstant.
In particular, $(G(n), f_1(n), \ldots , f_r(n))$ is bounded and, enlarging $S$, we may assume that it is an $S$-unit for all $n \in \mathbb{N}$.

Let $N_\mathbb{K}(\alpha)$ denotes the norm of $\alpha \in \mathbb{K}$ over $\mathbb{Q}$.
It is easy to prove that there exist a positive integer $g$ and a nonconstant polynomial $\widetilde{G} \in \mathbb{Z}[X]$ such that $N_\mathbb{K}(G(n)) = \widetilde{G}(n) / g$ for all $n \in \mathbb{N}$.
Let $h$ be the number of irreducible factors of $\widetilde{G}$ in $\mathbb{Z}[X]$.
Again by enlarging $S$, we may assume that $g$ is an $S$-unit.

Let $\mathcal{P}$ be the set of all prime numbers $p$ which do not make $\widetilde{G}$ vanish identically modulo $p$, such that $p \mathcal{O}_\mathbb{K}$ has no prime ideal factor $\pi_v$ with $v \in S$, and such that the minimum order of the $\alpha_i / \alpha_j$ ($i \neq j$) modulo any prime ideal above $p$ is at least $p^{1/4}$.
Furthermore, let us define 
\begin{equation*}
\Omega_p := \left\{\ell \in \{0, \ldots, p - 1\} : \widetilde{G}(\ell) \equiv 0 \pmod p\right\} ,
\end{equation*}
for any $p \in \mathcal{P}$, and $\Omega_p := \varnothing$ for any prime number $p \notin \mathcal{P}$.

Let $x \geq 3$, $y := (\log x)^{2^r h}$, and $z := x^{1/(d + 1)}$, where $d := [\mathbb{K} : \mathbb{Q}]$.
We split $\mathcal{N}(x)$ into two subsets:
\begin{align*}
\mathcal{N}_1 &:= \left\{n \in \mathcal{N}(x) : (n \bmod p) \notin \Omega_p, \; \forall p \in {]y, z]}\right\} , \\
\mathcal{N}_2 &:= \mathcal{N} \setminus \mathcal{N}_1 .
\end{align*}

First, we give an upper bound for $\#\mathcal{N}_1$.
Hereafter, all the implied constants may depend on $F$ and $G$.
Clearly, $\#\Omega_p \subsetneq \{0,1,\ldots,p-1\}$ and $\#\Omega_p \leq \deg(\widetilde{G})$ for all prime number $p$, while from Lemma~\ref{lem:zerosmodp} and Lemma~\ref{lem:ponequarter} it follows that
\begin{equation*}
\sum_{p \leq x} \#\Omega_p \cdot \frac{\log p}{p} = h \log x + O(1) .
\end{equation*}
Therefore, applying Lemma~\ref{lem:sieve}, we obtain
\begin{equation*}
\#\mathcal{N}_1 \ll x \cdot \left(\frac{\log y}{\log x}\right)^h \ll \left(\frac{\log \log x}{\log x}\right)^h.
\end{equation*}

Now we give an upper bound for $\#\mathcal{N}_2$.
If $n \in \mathcal{N}_2$ then there exist $p \in \mathcal{P} \cap {]y, z]}$ and $\ell \in \Omega_p$ such that $n \equiv \ell \pmod p$.
In particular, $p$ divides $N_\mathbb{K}(G(\ell))$ in $\mathcal{O}_S$ and, since $p\mathcal{O}_\mathbb{K}$ has no prime ideal factor $\pi_v$ with $v \in S$, it follows that there exists some prime ideal $\pi$ of $\mathcal{O}_S$ lying above $p$ and dividing $G(\ell)$.
Let $\mathbb{F}_q := \mathcal{O}_S / \pi$, so that $q$ is a power of $p$.
Write $n = \ell + mp$, for some integer $m \geq 0$. 
Since $\pi$ divides $G(n)$ and $F(n) / G(n) \in \mathcal{O}_S$, we have that $F(n)$ is divisible by $\pi$ too.
As a consequence, we obtain that
\begin{equation}\label{equ:Fmodpi}
\sum_{i = 1}^r f_i(\ell)\, \alpha_i^\ell\, \big(\alpha_i^p\big)^m \equiv \sum_{i = 1}^r f_i(n)\, \alpha_i^n \equiv F(n) \equiv 0 \pmod \pi.
\end{equation}
Note that $f_1(\ell), \ldots, f_r(\ell)$ cannot be all equal to zero modulo $\pi$, since $\pi$ divides $G(\ell)$ and $(G(\ell), f_1(\ell), \ldots, f_r(\ell))$ is an $S$-unit.
Note also that the minimum order $N$ of the $\alpha_i^p / \alpha_j^p$ ($i \neq j$) modulo $\pi$ is equal to the minimum order of the $\alpha_i / \alpha_j$ ($i \neq j$) modulo $\pi$, since $(p, q - 1) = 1$.
In particular, $N \geq p^{1/4}$, in light of the definition of $\mathcal{P}$. 

Therefore, we can apply Lemma~\ref{lem:finitefield} to the congruence (\ref{equ:Fmodpi}), getting that the number of possible values of $m$ modulo $q - 1$ is at most $4(q - 1) p^{-1/2^r}$.
Consequently, the number of possible values of $n \leq x$ is at most 
\begin{equation*}
4(q - 1) p^{-1/2^r} \left(\frac{x}{p(q - 1)} + 1\right) \ll \frac{x}{p^{1 + 1/2^r}} ,
\end{equation*}
since $p(q - 1) < p^{d + 1} \leq z^{d + 1} \leq x$.
Hence, we have
\begin{equation*}
\#\mathcal{N}_2 \ll \sum_{p \in \mathcal{P} \,\cap\, {]y, z]}} \frac{x}{p^{1 + 1/2^r}} \ll \int_y^{+\infty} \frac{\mathrm{d}t}{t^{1 + 1/2^r}} \ll \frac{x}{y^{1/2^r}} = \frac{x}{(\log x)^h} .
\end{equation*}
In conclusion,
\begin{equation*}
\#\mathcal{N}(x) = \#\mathcal{N}_1 + \#\mathcal{N}_2 \ll x \cdot \left(\frac{\log \log x}{\log x}\right)^h
\end{equation*}
as claimed.

\section{Concluding remarks}

Let us briefly explain the computation of $h$.
First, we have an effective procedure to test if there exists a nonzero polynomial $P \in \mathbb{C}[X]$ such that the sequences $n \mapsto P(n) F(n) / G(n)$ and $n \mapsto G(n) / P(n)$ are linear recurrences, and in such a case $P$ can be determined (see \cite[p.~435, Remark~1]{MR1918678}).

On the one hand, if $P$ does not exist, then Theorem~\ref{thm:infinite} implies that $\mathcal{N}$ is finite, hence $h$ can be any positive integer.
Moreover, using any effective version of the Skolem--Mahler--Lech Theorem at the end of the proof of \cite[Proposition~2.1]{MR1918678}, it is possible to bound $\#\mathcal{N}$.
Therefore, the implied constant in Theorem~\ref{thm:main} is effectively computable.

On the other hand, if $P$ exists, then we can write the linear recurrences $H = PF / G$ as
\begin{equation*}
H(n)  = \sum_{i = 1}^s h_i(n) \, \beta_i^n ,
\end{equation*}
for some $\beta_1, \ldots, \beta_s \in \mathbb{C}^*$ and $h_1, \ldots, h_s \in \mathbb{C}[X]$.
Setting $Q := P / (P, h_1, \ldots, h_s)$, we have that $\widetilde{Q}(n) = N_\mathbb{K}(Q(n))$ is a polynomial in $\mathbb{Q}[X]$ and $h$ can be taken as the number of irreducible factors of $\widetilde{Q}$.
Furthermore, the implied constant in Theorem~\ref{thm:main} is effectively computable, since all the implied constants of the results used in the proof of Theorem~\ref{thm:main} are effectively computable. 

\subsection*{Acknowledgements}
The author thanks Umberto Zannier for a fruitful conversation on Theorem~\ref{thm:zerodensity}, and also the anonymous referee for useful comments which improved the quality of the paper.

\bibliographystyle{amsplain}

\begin{thebibliography}{10}

\bibitem{MR2928495}
J.~J. Alba~Gonz{\'a}lez, F.~Luca, C.~Pomerance, and I.~E. Shparlinski, \emph{On
  numbers {$n$} dividing the {$n$}th term of a linear recurrence}, Proc. Edinb.
  Math. Soc. (2) \textbf{55} (2012), no.~2, 271--289.

\bibitem{MR1131414}
R.~Andr{\'e}-Jeannin, \emph{Divisibility of generalized {F}ibonacci and {L}ucas
  numbers by their subscripts}, Fibonacci Quart. \textbf{29} (1991), no.~4,
  364--366.

\bibitem{MR2487729}
Y.~F. Bilu, \emph{The many faces of the subspace theorem [after {A}damczewski,
  {B}ugeaud, {C}orvaja, {Z}annier{$\ldots$}]}, Ast\'erisque (2008), no.~317,
  Exp. No. 967, vii, 1--38, S{\'e}minaire Bourbaki. Vol. 2006/2007.

\bibitem{MR1815369}
R.~Canetti, J.~Friedlander, S.~Konyagin, M.~Larsen, D.~Lieman, and
  I.~Shparlinski, \emph{On the statistical properties of {D}iffie-{H}ellman
  distributions}, Israel J. Math. \textbf{120} (2000), no.~part A, 23--46.

\bibitem{MR1692189}
P.~Corvaja and U.~Zannier, \emph{Diophantine equations with power sums and
  universal {H}ilbert sets}, Indag. Math. (N.S.) \textbf{9} (1998), no.~3,
  317--332.

\bibitem{MR1918678}
P.~Corvaja and U.~Zannier, \emph{Finiteness of integral values for the ratio of
  two linear recurrences}, Invent. Math. \textbf{149} (2002), no.~2, 431--451.

\bibitem{MR1990179}
G.~Everest, A.~van~der Poorten, I.~Shparlinski, and T.~Ward, \emph{Recurrence
  sequences}, Mathematical Surveys and Monographs, vol. 104, American
  Mathematical Society, Providence, RI, 2003.

\bibitem{MR1555183}
G.~H. Hardy and J.~E. Littlewood, \emph{Some problems of `{P}artitio
  numerorum'; {III}: {O}n the expression of a number as a sum of primes}, Acta
  Math. \textbf{44} (1923), no.~1, 1--70.

\bibitem{MR2061214}
H.~Iwaniec and E.~Kowalski, \emph{Analytic number theory}, American
  Mathematical Society Colloquium Publications, vol.~53, American Mathematical
  Society, Providence, RI, 2004.

\bibitem{MR3540955}
Z.~Kelley, \emph{Roots of sparse polynomials over a finite field}, LMS J.
  Comput. Math. \textbf{19} (2016), no.~suppl. A, 196--204.

\bibitem{Kro80}
L.~Kronecker, \emph{{\"U}ber die {I}rreductibilit{\"a}t von {G}leichungen},
  {M}onatsberichte {K}{\"o}nigl. {P}reu{\ss}isch. {A}kad. {W}issenschaft.
  {B}erlin (1880), 155--162.

\bibitem{MR3409327}
F.~Luca and E.~Tron, \emph{The distribution of self-{F}ibonacci divisors},
  Advances in the theory of numbers, Fields Inst. Commun., vol.~77,
  pp.~149--158.

\bibitem{MR990517}
R.~Rumely, \emph{Notes on van der {P}oorten's proof of the {H}adamard quotient
  theorem. {I}, {II}}, S\'eminaire de {T}h\'eorie des {N}ombres, {P}aris
  1986--87, Progr. Math., vol.~75, Birkh\"auser Boston, Boston, MA, 1988,
  pp.~349--382, 383--409.

\bibitem{San16bis}
C.~Sanna, \emph{The {$p$}-adic valuation of {L}ucas sequences}, Fibonacci
  Quart. \textbf{54} (2016), no.~2, 118--124.

\bibitem{San16}
C.~Sanna, \emph{On numbers {$n$} dividing the {$n$}th term of a {L}ucas
  sequence}, Int. J. Number Theory \textbf{13} (2017), no.~3, 725--734.

\bibitem{MR2920749}
J.-P. Serre, \emph{Lectures on {$N_X (p)$}}, Chapman \& Hall/CRC Research Notes
  in Mathematics, vol.~11, CRC Press, Boca Raton, FL, 2012.

\bibitem{MR1271392}
L.~Somer, \emph{Divisibility of terms in {L}ucas sequences by their
  subscripts}, Applications of {F}ibonacci numbers, {V}ol. 5 ({S}t. {A}ndrews,
  1992), Kluwer Acad. Publ., Dordrecht, 1993, pp.~515--525.

\bibitem{MR1393479}
L.~Somer, \emph{Divisibility of terms in {L}ucas sequences of the second kind
  by their subscripts}, Applications of {F}ibonacci numbers, {V}ol.\ 6
  ({P}ullman, {WA}, 1994), Kluwer Acad. Publ., Dordrecht, 1996, pp.~473--486.

\bibitem{MR1395088}
P.~Stevenhagen and H.~W. Lenstra, Jr., \emph{Chebotar\"ev and his density
  theorem}, Math. Intelligencer \textbf{18} (1996), no.~2, 26--37.

\bibitem{MR929097}
A.~J. van~der Poorten, \emph{Solution de la conjecture de {P}isot sur le
  quotient de {H}adamard de deux fractions rationnelles}, C. R. Acad. Sci.
  Paris S\'er. I Math. \textbf{306} (1988), no.~3, 97--102.

\end{thebibliography}

\end{document}